\journalname{}
\begin{document}
\title{A dynamical neural network approach for distributionally robust chance constrained Markov decision process 
}

\date{}

\author{Tian Xia, Jia Liu, Zhiping Chen }

\institute{Tian Xia \at
             School of Mathematics and Statistics, Xi'an Jiaotong University,
              Xi'an, 710049, P. R. China\\
            xt990221@stu.xjtu.edu.cn
           \and
              Jia Liu,  \at
              School of Mathematics and Statistics, Xi'an Jiaotong University,
              Xi'an, 710049, P. R. China\\
              jialiu@xjtu.edu.cn
           \and
              Zhiping Chen \at
             School of Mathematics and Statistics, Xi'an Jiaotong University, 
              Xi'an, 710049, P. R. China\\
              zchen@mail.xjtu.edu.cn
}

\maketitle

\abstract{
In this paper, we study the distributionally robust joint chance constrained Markov decision process.  {Utilizing the logarithmic transformation technique,} we derive its deterministic reformulation with bi-convex terms under the moment-based uncertainty set.{To cope with the non-convexity and improve the robustness of the solution,
} we propose a dynamical neural network
approach to 
solve 
the reformulated optimization problem.  
Numerical results on a machine replacement problem demonstrate the efficiency of the proposed dynamical neural network approach when compared with the sequential convex approximation approach.

}

\keywords{Markov decision process, Chance constraints, Distributionally robust optimization, Moment-based ambiguity set, Dynamical neural network}


\maketitle

\section{Introduction}

Markov decision process (MDP) is an important mathematical framework used to find an optimal dynamic policy in an uncertain environment. The randomness in an MDP often comes from two perspectives: reward \cite{varagapriya2022constrained} and transition probabilities \cite{mannor2016robust,ramani2022robust}. MDPs have been widely used in various fields, including autonomous driving \cite{brechtel2014probabilistic}, portfolio selection \cite{liu2023continual}, inventory control \cite{klabjan2013robust}, power systems \cite{wang2020mdp} and so on.

For many applications with strong safety requirements (healthcare \cite{goyal2022robust}, autonomous vehicles \cite{you2019advanced} etc), one needs to adopt the MDP model with safe constraints. 
 Many variations of constrained MDP have been extensively investigated, for instance, robust constrained MDP \cite{delage2010percentile}, risk constrained MDP with semi-variance \cite{yu2022zero}, Value-at-Risk \cite{ma2019state}, or Conditional Value-at-Risk \cite{prashanth2014policy}, etc.

As a typical extension, chance constraints are used in MDPs to ensure a high probability of constraints being satisfied.
They are important in situations with high safety requirements where the violation of a random constraint might bring severe outcome \cite{kuccukyavuz2022chance}. Delage and Mannor \cite{delage2010percentile} studied reformulations of 
chance constrained MDP (CCMDP) with random rewards or transition probabilities. Recently Varagapriya et al. \cite{varagapriya2022joint} applied joint chance constraints in constrained MDP  and find its reformulations when the rewards follow an elliptical distribution.

However, in many real-world applications of chance constrained MDP, the true probability distribution of rewards or the transition probability is unknown. The decision-maker may only know partial information of the true distribution.
It might lead to sub-optimal decisions or even failures of the decision-making system if the estimated distribution biased the true distribution.
The distributionally robust optimization (DRO) approach 
handles such ambiguity by making decisions against to the worst-case distribution in a set of all possible distributions, namely ambiguity set, rather than a single estimated distribution. There are two 
major 
types of ambiguity sets: the moments-based ones and the distance-based ones. In moments-based DRO \cite{wiesemann2014distributionally,delage2010distributionally,rahimian2019distributionally}, the decision-maker knows some moments information  
of the random parameters. In distance-based DRO \cite{hu2013kullback,gao2022distributionally,xie2021distributionally}, the decision-maker 
has a reference distribution and consider a ball centered at it with respect to a probability metric (say, Kullback-Leibler divergence, Wasserstein distance and so on),
given that she/he 
believes that the true distribution of random parameters is close to the reference distribution. Applying the DRO approach to chance constrained MDP, we have the distributionally robust chance constrained MDP (DRCCMDP) problem. As far as we know, only Nguyen et al. \cite{nguyen2022distributionally} studied the individual case of DRCCMDP 
with moments-based, $\phi$-divergence based and Wasserstein distance based ambiguity sets, respectively. However, the joint case of DRCCMDP has not been studied, which has important applications to guarantee the satisfaction of a complex system with multiple safety constraints in terms of a joint chance constraint.













Different optimization techniques have been widely used to find the optimal policy of an MDP problem, such as interior-point methods \cite{wright1997primal}, simplex method \cite{nelder1965simplex}. In practice, many solvers like GUROBI, MOSEK, CVX can be used to solve the resulting optimization problem directly. For instance, Delage and Mannor \cite{delage2010percentile} transformed the chance constrained MDP into a second-order cone programming (SOCP) and used a gradient descent algorithm to solve the SOCP to find the optimal policy. Varagapriya et al. \cite{varagapriya2022constrained} reformulated three kinds of constrained MDP problems into the linear programming (LP), SOCP and semi-definite programming (SDP), respectively, and solve them by CVX. Nguyen et al. \cite{nguyen2022distributionally} transformed the individual DRCCMDP into an SOCP or a mixed-integer SOCP when the ambiguity sets are moment-based, K-L divergence based or Wasserstein distance based, respectively. However, as for the joint DRCCMDP, it cannot directly be solved by an optimization solver due to the non-convexity of the joint chance constraint. And when the transformed optimization problem is large scale, the common optimization algorithms cannot find an accurate enough solution in a reasonable time. Therefore we choose dynamical neural network (DNN) as our solving approach to the joint DRCCMDP.






In this paper, we propose a dynamical neural network (DNN) approach to solve the joint DRCCMDP. The DNN approach is a typical  {AI based} technique that has been employed to solve optimization problems. Using DNNs to solve optimization problems was initiated by Hopfield and Tank \cite{hopfield1985neural}. Since then, the DNN approach has been extensively applied to solve different optimization problems, such as linear programming (LP) \cite{wang1993analysis,xia1996new}, second-order cone programming (SOCP) \cite{ko2011recurrent,nazemi2020new}, quadratic programming \cite{xia1996new1,nazemi2014neural,feizi2021solving}, nonlinear programming \cite{forti2004generalized,wu1996high}, minimax problems \cite{nazemi2011dynamical,gao2004neural}, stochastic game problems \cite{wu2022dynamical}, and geometric programming problems \cite{tassouli2023neural}, etc.
All of the work mentioned above adopts an ODE system to model the optimization problem, and then the optimal solution is obtained by solving the ODE system. The ODE systems have been shown to have global convergence properties, meaning that the state solutions converge to the optimal solution of the original optimization problem as the independent variable comes to infinity. In fact, Dissanayake et al. \cite{dissanayake1994neural} were the first to use a neural network to approximate the solution of differential equations, where the loss function contains two terms that satisfy the initial/boundary condition and the differential equation, respectively. Lagaris et al. \cite{lagaris1998artificial} developed a trial solution method that ensures initial conditions are always satisfied. Flamant et al. \cite{flamant2020solving} treat the parameters of ODE system models as an input variable to the neural network, allowing a neural network to be the solution of multiple differential equations, namely solution bundles. Neural networks nowadays have a large number of applications in computer vision, natural language processing, pattern recognition, and other fields \cite{dong2023adversarial,amiri2023novel}. It is certain that with the help of studies of ODE, the performance of DNNs, which transform the optimization problem into an ODE, can be continuously improved in the future. To the best of our knowledge, there is no study in the literature that uses a DNN approach to solve CCMDP problems.


After deriving a non-convex reformulation of the moment-based joint DRCCMDP problem, we apply the DNN approach to solve the obtained reformulation. As a comparison, we also apply the sequential convex approximation (SCA) algorithm \cite{hong2011sequential,liu2016stochastic} to solve the joint DRCCMDP problem, which decomposes the non-convex reformulation problem into a series of convex sub-problems. The main contributions of this paper can be summarized as follows:


\begin{itemize}
    \item For the first time, we study the joint DRCCMDP under a moments-based uncertainty set. 
    \item To the best of our knowledge, this is the first attempt to apply the 
    dynamic neural network approach to solve the CCMDP problem. 
    \item 
    Numerical experiments validate 
    the pros and cons of the dynamical neural network approach compared to the traditional sequential convex approximation approach.
\end{itemize}

The structure of this paper is as follows. In Section \ref{model}, we introduce the DRCCMDP model. In Section \ref{joints}, we study the reformulation of the moments-based joint DRCCMDP. In Section \ref{DNN_J}, we apply the dynamical neural network approach to solve the moments-based joint DRCCMDP problem. We transform the reformulated optimization problem into an equivalent ordinary differential equation (ODE) system, and then solve it  {using a neural network way}. In Section \ref{numerical}, we apply the proposed DNN approach to a machine replacement problem and compare it with the SCA algorithm. In the last section, we conclude the paper.


\section{Distributionally robust chance constrained MDP 
}\label{model}
\subsection{MDP}
We consider an infinite horizon Markov decision process (MDP) as a tuple $(\mathcal{S}, \mathcal{A}, P, r_0, q, \alpha),$ where:
\begin{itemize}
\item[$\bullet$]$\mathcal{S}$ is a finite state space with $|S|$ states whose generic element is denoted by $s$ .
\item[$\bullet$]$\mathcal{A}$ is a finite action space with $|\mathcal{A}|$ actions and $a\in\mathcal{A}(s)$ denotes an action belonging to the set of actions at state $s.$
\item[$\bullet$]$P\in\mathbb{R}^{|\mathcal{S}|\times|\mathcal{A}|\times|\mathcal{S}|}$ is the distribution of transition probability $p(\overline{s}|s,a),$ which denotes the probability of moving from state $s$ to $\overline{s}$ when the action $a\in\mathcal{A}(s)$ is taken.
\item[$\bullet$]$r_0(s,a)_{s\in\mathcal{S}, a\in\mathcal{A}(s)}:\mathcal{S}\times\mathcal{A}\rightarrow\mathbb{R}$ denotes a running reward, which is the reward at the state $s$ when the action $a$ is taken. $r_0=(r_0(s,a))_{s\in\mathcal{S}, a\in\mathcal{A}(s)}\in\mathbb{R}^{|\mathcal{S}|\times|\mathcal{A}|}$ is the running reward vector.
\item[$\bullet$]$q=(q(s_0))_{s_0\in\mathcal{S}}$ is the probability of the initial state $s_0$.
\item[$\bullet$]$\alpha$ is the discount factor which satisfies $\alpha\in[0,1).$
\end{itemize}
In an MDP, the agent aims at maximizing her/his cumulative value with respect to the whole trajectory by choosing an optimal policy. We know from \cite{sutton1999policy} that there are two ways of formulating the agent's objective function. One is the average reward formulation and the other is considering a discount factor $\alpha\in[0,1).$ 
We assume that the agent cares more about current rewards than future rewards, and thus consider the discounting value function in this paper.

We consider a discrete time controlled Markov chain $(s_t,a_t)_{t=0}^{\infty}$ defined on the state space $\mathcal{S}$ and the action space $\mathcal{A}$, where $s_t$ and $a_t$ are the state and action at time $t$, respectively. At the initial time $t=0,$ the initial state is $s_{0}\in\mathcal{S},$ and the action $a_0\in\mathcal{A}(s_0)$ is taken according to the initial state's probability $q.$ Then the agent gains a reward $r_0(s_0,a_0)$ based on the current state and action. When $t=1,$ the state moves to $s_1$ with the transition probability $p(s_1|s_0,a_0).$ The dynamics of the MDP repeat at state $s_1$ and continue in the following infinite time horizon. 

We assume that running rewards $r_0$ and transition probabilities $p$ are stationary, i.e., they both only depend on states and actions rather than on time. We define the policy $\pi=(\mu(a|s))_{s\in\mathcal{S},a\in\mathcal{A}(s)}\in\mathbb{R}^{|\mathcal{S}|\times|\mathcal{A}|}$ where $\mu(a|s)$ denotes the probability that the action $a$ is taken at state $s$, and $\xi_t=\{s_0,a_0,s_1,a_1,...,s_{t-1}$, $a_{t-1},s_t\}$ the whole historical trajectory by time $t.$ 
Sometimes the decisions made by the agent may vary at different time $t$ accordingly, thus the policy may vary depending on time. We call this kind of policy the history dependent policy, denoted as $\pi_h=(\mu_{t}(a|s))_{s\in\mathcal{S},a\in\mathcal{A}(s)}, t=1,2,...,\infty.$ When the policy is independent of time, we call it a stationary policy. That is, there exists a vector $\overline{\pi}$ such that $\pi_h=(\mu_{t}(a|s))_{s\in\mathcal{S},a\in\mathcal{A}(s)}=\overline{\pi}=(\overline{\mu}(a|s))_{s\in\mathcal{S},a\in\mathcal{A}(s)}$ for all $t.$ Let $\Pi_h$ and $\Pi_s$ be the sets of all possible history dependent policies and stationary policies, respectively. When the reward $r_0(s,a)$ is random, for a fixed $\pi_h\in\Pi_h,$ we consider the discounted expected value function 
\begin{equation}\label{00}
    V_{\alpha}(q,\pi_h)=\sum_{t=0}^{\infty}\alpha^{t}\mathbb{E}_{q,\pi_h}(r_0(s_t,a_t)),
\end{equation}
where $\alpha\in [0,1)$ is the given discount factor. The object of the agent is to maximize the discounted expected value function
\begin{equation}\label{optim MDP}
    \max_{\pi_{h}\in\Pi_h}{\sum_{t=0}^{\infty}\alpha^{t}\mathbb{E}_{q,\pi_h}(r_0(s_t,a_t))}.
\end{equation}

We denote by $d_{\alpha}(q,\pi_h,s,a)$ the $\alpha$-discounted occupation measure \cite{altman1999constrained} such that 
$$
\begin{aligned}
d_{\alpha}(q,\pi_h,s,a) 
=(1-\alpha)\sum_{t=0}^{\infty}\alpha^{t}p_{q,\pi_{h}}(s_{t}=s, a_{t}=a),\ \  \forall{s\in\mathcal{S},\ \  a\in\mathcal{A}(s)}.
\end{aligned}
$$
As the state and action spaces are both finite, by Theorem 3.1 in \cite{altman1999constrained}, the occupation measure $d_{\alpha}(q,\pi_h,s,a)$ is a well-defined probability distribution. By taking the occupation measure into \eqref{00}, the discounted expected value function can be written as
$$
\begin{aligned}
V_{\alpha}(q,\pi_h) & =\sum_{(s,a)\in\Lambda}\sum_{t=0}^{\infty}\alpha^{t}p_{q,\pi_h}(s_{t}=s,a_{t}=a) r_{0}(s,a) \\
& =\frac{1}{1-\alpha}\sum_{(s,a)\in\Lambda\label{value}}d_{\alpha}(q,\pi_h,s,a) r_0(s,a),
\end{aligned} 
$$ where $\Lambda=\left\{(s,a)|s\in\mathcal{S}, a\in\mathcal{A}(s)\right\}$.

It is known from Theorem 3.2 in \cite{altman1999constrained} that the set of occupation measures corresponding to history dependent policies is equal to that corresponding to stationary ones. We thus have: 
\begin{lemma}[\cite{varagapriya2022constrained,altman1999constrained}]\label{th1}
    The set of occupation measures corresponding to history dependent policies is equal to the set
\begin{equation}\label{le}
    \Delta_{\alpha,q}=\left\{\tau \in \mathbb{R}^{|\mathcal{S}|\times|\mathcal{A}|}\Bigg|
    \begin{array}{l}
\sum\limits_{(s,a)\in\Lambda}\tau(s,a)\left(\delta(s',s)-\alpha p(s'|s,a)\right)=(1-\alpha)q(s'), \\
\tau(s,a)\ge0, \forall s',s\in\mathcal{S},a\in\mathcal{A}(s).
\end{array}\right\},
\end{equation} where $\delta(s',s)$ is the Kronecker delta, such that the expected discounted value function defined by \eqref{optim MDP} remains invariant to time $t$. 
\end{lemma}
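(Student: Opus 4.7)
The plan is to establish the equality by showing the two inclusions, leveraging the stated results of Altman (Theorems 3.1 and 3.2 in \cite{altman1999constrained}) so that the argument reduces to checking (i) that every occupation measure satisfies the linear flow equations defining $\Delta_{\alpha,q}$, and (ii) that every point of $\Delta_{\alpha,q}$ arises as the occupation measure of some stationary policy.

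For the forward inclusion, I would fix an arbitrary $\pi_h \in \Pi_h$ and write $d_{\alpha}(q,\pi_h,s,a)$ as $(1-\alpha)\sum_{t=0}^{\infty}\alpha^{t}p_{q,\pi_{h}}(s_t=s,a_t=a)$. Non\-negativity is immediate. For the flow constraint, I would fix $s'\in\mathcal{S}$, isolate the $t=0$ term, which contributes $(1-\alpha)q(s')$ after summing over the actions, and then apply the one\-step law of total probability $p_{q,\pi_h}(s_{t+1}=s') = \sum_{(s,a)\in\Lambda} p_{q,\pi_h}(s_t=s,a_t=a)\,p(s'|s,a)$ to the remaining terms. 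Re\-indexing the sum $t\mapsto t+1$ and factoring out $\alpha$ produces exactly $\alpha\sum_{(s,a)\in\Lambda} d_{\alpha}(q,\pi_h,s,a)\,p(s'|s,a)$, so rearranging gives the required identity with the Kronecker $\delta(s',s)$.

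For the reverse inclusion, given $\tau\in\Delta_{\alpha,q}$, I would first verify that $\tau(s) := \sum_{a\in\mathcal{A}(s)}\tau(s,a)$ is strictly positive whenever reachable (summing the flow constraint over $s'$ yields $\sum_{s,a}\tau(s,a)=1$, so $\tau$ is a probability distribution on $\Lambda$), and then define the stationary policy
\[
\overline{\mu}(a|s) =
\begin{cases}
\tau(s,a)/\tau(s), & \tau(s) > 0,\\
\text{arbitrary in }\mathcal{A}(s), & \tau(s)=0.
\end{cases}
\]
It then suffices to show that the occupation measure induced by $\overline{\pi}\in\Pi_s$ coincides with $\tau$. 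Under $\overline{\pi}$, the state marginal $\rho_t(s')=p_{q,\overline{\pi}}(s_t=s')$ satisfies $\rho_0=q$ and $\rho_{t+1}(s')=\sum_{s,a}\rho_t(s)\overline{\mu}(a|s)p(s'|s,a)$; defining $x(s') = (1-\alpha)\sum_t \alpha^t \rho_t(s')$ and its state\-action counterpart $y(s',a')=x(s')\overline{\mu}(a'|s')$ I would show that $y$ solves the same linear system as $\tau$. Since the flow constraints in \eqref{le}, once a policy is fixed through the disintegration $\tau(s,a)=\tau(s)\overline{\mu}(a|s)$, reduce to a non\-singular linear system in the state marginals (the matrix $I-\alpha P_{\overline{\pi}}$ is invertible because $\alpha\in[0,1)$ and $P_{\overline{\pi}}$ is stochastic), uniqueness forces $y=\tau$. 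Combining with Theorem 3.2 of \cite{altman1999constrained}, which equates occupation measures over $\Pi_h$ and $\Pi_s$, closes the argument.

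The main obstacle will be the reverse inclusion, and specifically the well\-definedness and consistency of the disintegration $\tau(s,a)=\tau(s)\overline{\mu}(a|s)$: I must handle the case $\tau(s)=0$ carefully (those states are simply unreachable under $\overline{\pi}$ starting from $q$, so the arbitrary choice of $\overline{\mu}(\cdot|s)$ does not affect the resulting occupation measure), and I must appeal to the invertibility of $I-\alpha P_{\overline{\pi}}$ to conclude uniqueness of solutions of the Bellman flow equations. The final clause of the lemma, concerning time\-invariance of the expected discounted value \eqref{optim MDP}, then follows because once $\tau\in\Delta_{\alpha,q}$ is fixed, $V_{\alpha}(q,\pi_h)=\tfrac{1}{1-\alpha}\sum_{(s,a)}\tau(s,a)r_0(s,a)$ depends only on $\tau$ and not on the temporal structure of the policy.
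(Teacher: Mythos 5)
Your proof is correct, but it takes a different route from the paper in the sense that the paper does not prove this lemma at all: it simply invokes Theorems 3.1 and 3.2 of \cite{altman1999constrained} (via \cite{varagapriya2022constrained}), which assert that occupation measures are well defined and that the sets of occupation measures generated by history-dependent and by stationary policies coincide, and then identifies the latter with the polytope $\Delta_{\alpha,q}$. You instead supply the self-contained two-inclusion argument that underlies those cited results: the forward inclusion via the telescoping/one-step law of total probability (isolating the $t=0$ term to produce $(1-\alpha)q(s')$ and re-indexing to absorb the $\alpha p(s'|s,a)$ terms), and the reverse inclusion via the disintegration $\overline{\mu}(a|s)=\tau(s,a)/\tau(s)$ together with invertibility of $I-\alpha P_{\overline{\pi}}$. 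Both steps check out, including your treatment of states with $\tau(s)=0$ (the induced state marginal $x$ solves the same nonsingular linear system as $\tau(\cdot)$, hence vanishes there too, so the arbitrary choice of $\overline{\mu}(\cdot|s)$ is immaterial). Two small remarks: your closing appeal to Theorem 3.2 of \cite{altman1999constrained} is redundant, since your reverse inclusion already exhibits each $\tau\in\Delta_{\alpha,q}$ as the occupation measure of a stationary (hence history-dependent) policy and your forward inclusion covers all of $\Pi_h$; and the phrase ``strictly positive whenever reachable'' is not quite what you establish (nor what you need) --- what matters is only the dichotomy $\tau(s)>0$ versus $\tau(s)=0$, which you do handle. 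What your approach buys is a proof readable without consulting \cite{altman1999constrained}; what the paper's citation buys is brevity and deference to a standard reference.
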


With Lemma \ref{th1}, the MDP problem \eqref{optim MDP} with history dependent policies is equivalent to the following optimization problem:
\begin{subequations}\label{9pp}
\begin{eqnarray}
& \max\limits_{\tau} & \frac{1}{1-\alpha}\sum\limits_{(s,a)\in\Lambda}\tau(s,a)r_0(s,a)\\
&{\rm s.t.} & \tau\in\Delta_{\alpha,q}.
\end{eqnarray}
\end{subequations}

\begin{remark}\label{TM}
If $\tau^{\star}$ is an optimal solution of \eqref{9pp}, then the optimal stationary policy of \eqref{optim MDP} is $\frac{\tau^{\star}(s,a)}{\sum\limits_{a\in\mathcal{A}(s)}\tau^{\star}(s,a)}$ for all $(s,a)\in \Lambda$, whenever the denominator is nonzero.
\end{remark}

In a constrained MDP (CMDP), on the basis of the MDP defined above, we consider both running objective rewards in the objective function (denoted by $r_0$ introduced pre-ahead) and some running constraint rewards in some safe constraints. Let $r_{k}(s,a)_{(s,a)\in\Lambda}:\mathcal{S}\times\mathcal{A}\rightarrow\mathbb{R}, k=1,2,...,K,$ be the running constraint rewards and $r_k=(r_{k}(s,a))_{(s,a)\in\Lambda}\in\mathbb{R}^{|\Lambda|}$ be the running constraint rewards vector. Let $\Xi=(\xi_k)_{k=1}^{K}$ be the bounds for the constraints. A CMDP is defined by the tuple $(\mathcal{S}, \mathcal{A}, R, \Xi, P, q, \alpha)$ where $R=(r_k)_{k=0}^{K}.$

Considering the natural situation that more attention is paid on optimizing current rewards than future ones, we apply the discount factor in the expected constrained value function and define 
\begin{equation}
    \phi_{k,\alpha}(q,\pi_h)=\frac{1}{1-\alpha}\sum_{(s,a)\in\Lambda}d_{\alpha}(q,\pi_h,s,a) r_k(s,a)
\end{equation} to be the $k$-th constrained expected value function. Combined with Lemma \ref{th1}, the object of the agent in a CMDP can be written as the following optimization problem
\begin{subequations}
\begin{eqnarray}
& \max\limits_{\tau} & \frac{1}{1-\alpha}\sum\limits_{(s,a)\in\Lambda}\tau(s,a)r_0(s,a)\\
&{\rm s.t.} & \sum\limits_{(s,a)\in\Lambda}\tau(s,a)r_k(s,a)\ge \xi_{k}, \ \  k=1,2,...,K,\\
&&\tau\in\Delta_{\alpha,q}.
\end{eqnarray}
\end{subequations}



\subsection{Distributionally robust chance constrained MDP}
In many applications, the reward vectors $r_{k}, k=0,1,...,K$ are random. It's reasonable for us to consider the MDP with random rewards. We use the expected value of the objective rewards as the objective function.  {To ensure a high satisfaction probability of safe constraints, we use a chance constraint for the safe constraints in the CMDP.} We denote such an MDP problem as the chance-constrained MDP (CCMDP).

Let $F$ denote the joint probability distribution of $r_1, r_2,...,r_K$ and $\hat{\epsilon}$ denote the confidence level for the $K$ constraints. Then, the joint CCMDP (J-CCMDP) can be defined  as the following optimization problem
\begin{subequations}
\begin{eqnarray}
\rm{(J-CCMDP)} & \max\limits_{\tau} & \frac{1}{1-\alpha}\mathbb{E}_{F_0}[\tau^{\top}\cdot r_{0}]\\
&{\rm s.t.} & \mathbb{P}_{F}(\tau^{\top}\cdot r_k\ge \xi_{k},  k=1,2,...,K)\ge\hat{\epsilon}, \\
&&\tau\in\Delta_{\alpha,q}
\end{eqnarray}
\end{subequations} on a tuple $(\mathcal{S}, \mathcal{A}, R, \Xi, P, F, q, \alpha, \hat{\epsilon})$, where $\hat{\epsilon}\in[0,1]$.

When the satisfactions of different constraints are considered separately, we obtain the following individual CCMDP, 
\begin{subequations}
\begin{eqnarray}
\rm{(I-CCMDP)} & \max\limits_{\tau} & \frac{1}{1-\alpha}\mathbb{E}_{F_0}[\tau^{\top}\cdot r_0]\\
&{\rm s.t.} & \mathbb{P}_{F_k}(\tau^{\top}\cdot r_k\ge \xi_{k})\ge\epsilon_k,  k=1,2,...,K\\
&&\tau\in\Delta_{\alpha,q},
\end{eqnarray}
\end{subequations} where $F_{k}$ is the probability distribution of $r_{k}$ and $\epsilon_{k}\in[0,1]$ is the confidence level of the $k-$th constraint.

If the distribution information of rewards $r_k$ are not perfectly known, we can apply the distributionally robust optimization approach to handle the ambiguity of $\hat{F}$ or $F_{k},k=0,...,K$. Then we consider a distributionally robust chance constrained MDP (DRCCMDP). 
The joint DRCCMDP (J-DRCCMDP) can be defined as the tuple $(\mathcal{S}, \mathcal{A}, R, \Xi, P, \mathcal{D}, \mathcal{F}_0, \mathcal{F}, q, \alpha, \hat{\epsilon})$, where $\hat{\epsilon}\in[0,1]$, $\mathcal{F}_0$ is the ambiguity set for the unknown
distribution $F_0$ and $\mathcal{F}$ is the ambiguity set for the unknown 
joint distribution $F$ of $r_{1}, r_2,...,r_k$. The resulting J-DRCCMDP can be formulated as
\begin{subequations}\label{Jobj}
\begin{eqnarray}
\rm{(J-DRCCMDP)}  & \max\limits_{\tau}&\inf\limits_{F_{0}\in\mathcal{F}_0} \  \frac{1}{1-\alpha}\mathbb{E}_{F_0}[\tau^{\top}\cdot r_0]\label{12a}\\
 & {\rm s.t.} & \inf\limits_{F\in\mathcal{F}}\  \label{JF}\mathbb{P}_{F}(\tau^{\top}\cdot r_k\ge \xi_{k}, k=1,2,...,K)\ge\hat{\epsilon}, \\
&&\tau\in\Delta_{\alpha,q}.
\end{eqnarray}
\end{subequations}

As a speical case, the individual DRCCMDP (I-DRCCMDP) 
can be written as the following optimization problem:
\begin{subequations}\label{obj MDP}
\begin{eqnarray}
\rm{(I-DRCCMDP)}  & \max\limits_{\tau} & \inf\limits_{F_{0}\in\mathcal{F}_0} \   \frac{1}{1-\alpha}\mathbb{E}_{F_0}[\tau^{\top}\cdot r_0] \label{kj} \\
& {\rm s.t.} & \inf\limits_{F_{k}\in\mathcal{F}_k} \ \mathbb{P}_{F_k}\label{pd}(\tau^{\top}\cdot r_k\ge \xi_{k})\ge\epsilon_k,\   k=1,2,...,K, \\
&&\tau\in\Delta_{\alpha,q},
\end{eqnarray}
\end{subequations} where $\mathcal{F}_k$ is the ambiguity set for the distribution $F_k$ and $\epsilon_{k}\in[0,1]$ is the confidence level of the $k-$th constraint.

\section{moment based J-DRCCMDP}\label{joints}

In this section, we consider the reformulation of the J-DRCCMDP problem $\eqref{Jobj}$ when we only know information about the first two order moments of the joint distribution.

\subsection{Reformulation of moment based J-DRCCMDP}
We first consider the ambiguity of the marginal distribution.
Applying the Mahalanobis distance, we define the ambiguity of the mean vector through an ellipsoid of size centered at $\mu_k$ for each $k=0,1,...,K$. And we specify the covariance matrix of the reward vector lies in a positive semi-definite cone, which gives a one-side estimation on the covariance. In detail, we define
\begin{equation}\label{moments1}
    \mathcal{F}_k=\left\{F_k \Bigg| \begin{array}{l}
(\mathbb{E}_{F_k}[r_k]-\mu_k)^{\top}(\Sigma_k)^{-1}(\mathbb{E}_{F_k}[r_k]-\mu_k)\le\rho_{1,k}, \\
{\rm{Cov}}_{F_k}[r_k]\preceq_{S}\rho_{2,k}\Sigma_k.
\end{array}\right\},
\end{equation} where ${\rm{Cov}}_{F_k}[r_k]=\mathbb{E}_{F_k}\left[(r_{k}-\mathbb{E}_{F_k}[r_k])^{\top}(r_{k}-\mathbb{E}_{F_k}[r_k])\right]$, $\mu_{k}=[\mu_{0}^k, \mu_{1}^k,..., \mu_{|\Lambda|}^k]^{\top}$ and $\Sigma_k=\{\sigma_{i,j}^{k},i,j=1,2,...,|\Lambda|\}$ for $k=0,1,2,...,K$. Here, $\mu_{i}^{k}$ is the reference value of the expected value of $r_{k}(i)$, $i=1,2,...,|\Lambda|, k=0,1,...,K$ and $\sigma_{i,j}^k$ is the reference value of the covariance between $r_{k}(i)$ and $r_{k}(j)$, $i,j=1,2,...,|\Lambda|, k=0,1,...,K$. We assume that $\Sigma_k$ is a positive semidefinite matrix, $\rho_{1,k},\rho_{2,k}$ are two nonnegative parameters controlling the size of the uncertainty sets. $A\preceq_{S}B$ means $B-A\in\mathbb{S}^n$, where $\mathbb{S}^{n}$ is the $n$-dimensional positive semidefinite cone.

We assume that different rows in the joint chance constraint are independent of each other and consider the following ambiguity set for the joint distribution 
\begin{equation}\label{Join}
\mathcal{F}:=\mathcal{F}_{1}\times\cdots\times\mathcal{F}_{K}=\left\{F=F_{1}\times\cdots\times F_{K}|F_{k}\in\mathcal{F}_{k}, k=1,...,K \right\},
\end{equation} where $F$ is the joint distribution for random vectors $r_1,...,r_K$ with marginals $F_1,...,F_K$ and $\mathcal{F}_{1},...,\mathcal{F}_{K}$ are all defined in \eqref{moments1}. Then we have the following proposition.
\begin{proposition}
Given the ambiguity set $\mathcal{F}$ defined in \eqref{Join}, the J-DRCCMDP problem \eqref{Jobj} can be reformulated as:
\begin{subequations}\label{gh}
\begin{eqnarray}
& \min\limits_{\tau\in\mathbb{R}_{+}^{|\Lambda|}, h\in\mathbb{R}_{+}^{K}} & \frac{1}{1-\alpha}\left[-\tau^{\top}\mu_{0}+\sqrt{\rho_{1,0}}\Vert{(\Sigma_0)^{\frac{1}{2}}\tau}\Vert\right]\\
&{\rm s.t.} & \tau^{\top}\mu_{k}-\left(\sqrt{\frac{h_k}{1-\label{2r}h_k}}\sqrt{\rho_{2,k}}+\sqrt{\rho_{1,k}}\right)\Vert{(\Sigma_k)^{\frac{1}{2}}\tau}\Vert\ge\xi_{k},  k=1,2,...,K,\\
&&0\le h_{k}\le \label{3r}1,k=1,2,...,K,\\
&&\prod_{k=1}^{K}h_{k}\ge\hat{\epsilon},\\
&&\tau\in\Delta_{\alpha,q}.\label{4r}
\end{eqnarray}
\end{subequations}
\end{proposition}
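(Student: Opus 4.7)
The plan is to split the proof into three pieces: (i) closing the worst-case expectation in the objective \eqref{12a}, (ii) factorizing the worst-case joint probability in \eqref{JF} using the product structure of $\mathcal{F}$, and (iii) deriving the tight one-sided Chebyshev bound for each resulting individual distributionally robust chance constraint.

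For the objective, I would observe that $\mathbb{E}_{F_0}[\tau^\top r_0]=\tau^\top \mathbb{E}_{F_0}[r_0]$ depends on $F_0$ only through its mean, and the first-moment condition in \eqref{moments1} forces this mean into the ellipsoid $\{m:(m-\mu_0)^\top \Sigma_0^{-1}(m-\mu_0)\le \rho_{1,0}\}$ (the second-moment condition is slack since no lower-bound on covariance is imposed). Setting $y=\Sigma_0^{-1/2}(m-\mu_0)$ reduces the inner minimization to minimizing $\tau^\top\mu_0+(\Sigma_0^{1/2}\tau)^\top y$ subject to $\|y\|\le \sqrt{\rho_{1,0}}$, whose optimum by Cauchy--Schwarz is $\tau^\top \mu_0-\sqrt{\rho_{1,0}}\|\Sigma_0^{1/2}\tau\|$. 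Flipping the sign converts the maximization into the $\min$ form displayed in \eqref{gh}.

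For the joint constraint \eqref{JF}, the product form $\mathcal{F}=\mathcal{F}_1\times\cdots\times\mathcal{F}_K$ makes the rows independent under every $F\in\mathcal{F}$, so
\[
\inf_{F\in\mathcal{F}}\mathbb{P}_F\bigl(\tau^\top r_k\ge \xi_k,\ k=1,\dots,K\bigr)=\prod_{k=1}^{K}\inf_{F_k\in\mathcal{F}_k}\mathbb{P}_{F_k}(\tau^\top r_k\ge \xi_k).
\]
Introducing slack variables $h_k\in[0,1]$, the single joint inequality is equivalent to the system $\inf_{F_k\in\mathcal{F}_k}\mathbb{P}_{F_k}(\tau^\top r_k\ge \xi_k)\ge h_k$ together with $\prod_k h_k\ge\hat{\epsilon}$, which produces exactly the auxiliary constraints in \eqref{gh}.

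The main work is deriving the closed form \eqref{2r} for each individual DRCC. I would reduce to the scalar $X=\tau^\top r_k$, whose mean $m_X=\tau^\top\mathbb{E}_{F_k}[r_k]$ ranges over $[\tau^\top\mu_k-\sqrt{\rho_{1,k}}\|\Sigma_k^{1/2}\tau\|,\ \tau^\top\mu_k+\sqrt{\rho_{1,k}}\|\Sigma_k^{1/2}\tau\|]$ (by Cauchy--Schwarz applied to the ellipsoid) and whose variance $\sigma_X^2=\tau^\top\mathrm{Cov}_{F_k}[r_k]\tau$ is bounded above by $\rho_{2,k}\|\Sigma_k^{1/2}\tau\|^2$. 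Using the tight one-sided Chebyshev (Cantelli) identity $\inf_{F:\mathbb{E}[X]=m_X,\mathrm{Var}[X]=\sigma_X^2}\mathbb{P}(X\ge \xi_k)=(m_X-\xi_k)^2/\bigl(\sigma_X^2+(m_X-\xi_k)^2\bigr)$ for $m_X\ge \xi_k$, and noting that this expression is decreasing in $\sigma_X^2$ and increasing in $m_X$, the worst case is attained at the smallest feasible $m_X$ and the largest feasible $\sigma_X^2$. Substituting these extremes into $\ge h_k$ and rearranging yields \eqref{2r}. The main obstacle I anticipate is justifying the two-step reduction rigorously: interchanging the infimum over $F_k\in\mathcal{F}_k$ with the ``Cantelli-then-worst-moments'' scheme requires checking that the Cantelli extremal distribution can be realized (or approached in the limit) by a member of $\mathcal{F}_k$, together with a careful verification that the monotonicity argument correctly picks out the worst corner of the moment set.
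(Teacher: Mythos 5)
Your proposal is correct and follows the same three-part decomposition as the paper: close the worst-case expectation in the objective, factor the joint worst-case probability over the product ambiguity set, and reformulate each resulting individual distributionally robust chance constraint. The difference is in how the first and third steps are discharged. The paper does not derive them at all: it observes that \eqref{12a} and \eqref{iva} have the same structure as the distributionally robust chance constrained geometric program in Liu et al.\ (2022) and simply invokes Theorem 1 of that reference to obtain \eqref{9845} and \eqref{irgp}, citing Shapiro et al.\ for the product factorization. You instead re-derive the content of that cited theorem from first principles: Cauchy--Schwarz over the Mahalanobis ellipsoid of means gives the worst-case mean $\tau^{\top}\mu_k-\sqrt{\rho_{1,k}}\Vert\Sigma_k^{1/2}\tau\Vert$, the positive semidefinite bound gives the worst-case variance $\rho_{2,k}\Vert\Sigma_k^{1/2}\tau\Vert^2$, and Cantelli's tight one-sided bound plus monotonicity in the two moments yields exactly \eqref{2r} after rearrangement. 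The obstacle you flag --- verifying that the Cantelli-extremal two-point law with the worst corner of moments is realized (or approached) by some $F_k\in\mathcal{F}_k$, and that the corner argument is legitimate --- is real but benign: the feasible pairs $(\mathbb{E}_{F_k}[r_k],\mathrm{Cov}_{F_k}[r_k])$ form a product set (the ellipsoid times the PSD-dominated cone), the bound is monotone in each coordinate of $(m_X,\sigma_X^2)$ separately, and taking $\mathrm{Cov}=\rho_{2,k}\Sigma_k$ with a suitable two-point construction along $\Sigma_k^{1/2}\tau$ attains the corner. This is precisely what the cited theorem packages. Your route buys a self-contained, checkable argument at the cost of length; the paper's buys brevity at the cost of opacity. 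Both are valid.
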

\begin{proof}
Given the ambiguity set $\mathcal{F}_{0}$ as defined in \eqref{moments1}, we find that \eqref{12a} has the same structure as (7) in \cite{liu2022distributionally}, so by Theorem 1 in \cite{liu2022distributionally}, \eqref{12a} has the following equivalence 
\begin{equation}\label{9845}
    \min\limits_{\tau\in\mathbb{R}_{+}^{|\Lambda|}} \frac{1}{1-\alpha}\left[-\tau^{\top}\mu_{0}+\sqrt{\rho_{1,0}}\Vert{(\Sigma_0)^{\frac{1}{2}}\tau}\Vert\right].
\end{equation} Given the ambiguity set $\mathcal{F}$ defined in \eqref{Join}, a joint DRO chance constraint is equivalent to the product of DRO counterparts of individual marginal distributions \cite{shapiro2021lectures}. This means that
$$\eqref{JF}\iff\prod_{k=1}^{K}\inf\limits_{F_{k}\in\mathcal{F}_k}\mathbb{P}_{F_k}(\tau^{\top}r_k\ge \xi_{k})\ge\hat{\epsilon}.$$ By introducing the auxiliary variables $h_{k}>0, k=1,2,...,K$, \eqref{JF} is equivalent to the following reformulation:
\begin{subequations}
\begin{eqnarray}
&\inf\limits_{F_{k}\in\mathcal{F}_k}\mathbb{P}_{F_k}(\tau^{\top}\label{iva} r_k\ge \xi_{k})\ge h_k,k=1,2,...,K,\label{16aa}\\
&\prod\limits_{k=1}\limits^{K}h_{k}\ge\hat{\epsilon},\ \  0\le h_{k}\le 1,\ \  k=1,2,...,K.\label{16bb}     
\end{eqnarray} 
\end{subequations} We can see that \eqref{iva} exhibits the same structure as the individual distributionally robust chance constrained geometric program with moments-based ambiguity set in \cite{liu2022distributionally}. Then, by Theorem 1 in \cite{liu2022distributionally}, \eqref{iva} has the following equivalent reformulation: 
\begin{equation}\label{irgp}
\tau^{\top}\mu_{k}-\left(\sqrt{\frac{h_k}{1-h_k}}\sqrt{\rho_{2,k}}+\sqrt{\rho_{1,k}}\right)\Vert{(\Sigma_k)^{\frac{1}{2}}\tau}\Vert\ge\xi_{k},  k=1,2,...,K.
\end{equation}
Combining with the reformulations \eqref{9845}, \eqref{16aa}, \eqref{16bb} and \eqref{irgp} gives us \eqref{gh}.
\end{proof}

\begin{remark}
By \cite{liu2016stochastic,liu2022distributionally,bartlett1946statistical}, we know that the logarithmic transformation is a common method to convert a nonconvex optimization problem into a convex one. After introducing $\tilde{\tau}(i)=\ln{\tau(i)}, i=1,2,...,|\Lambda|$ with the assumption that $\tau>0$, we have a reformulation of \eqref{gh}
\begin{subequations}\label{eq2}
\begin{eqnarray}
& \min\limits_{\tilde{\tau}\in\mathbb{R}^{|\Lambda|},h\in\mathbb{R}_{+}^{K}} & -\mu_{0}^{\top}e^{\tilde{\tau}-\log{(1-\alpha)}\cdot1_{|\Lambda|}}+\Vert(\Sigma_{0})^{\frac{1}{2}}e^{\tilde{\tau}+\left(\frac{1}{2}\log{(\rho_{1,0})}-\log{(1-\alpha)}\right)1_{|\Lambda|}}\Vert\label{17a}\\
&{\rm s.t.} & \mu_{k}^{\top}e^{\tilde{\tau}}-\Vert(\Sigma_{k})^{\frac{1}{2}}e^{\tilde{\tau}+\log{(\sqrt{\frac{h_k}{1-h_k}}\sqrt{\rho_{2,k}}+\sqrt{\rho_{1,k}})}\cdot1_{|\Lambda|}}\Vert\ge\xi_k,  k=1,2,...,K\\\label{17c}
&&0\le h_{k}\le 1,k=1,2,...,K,\\    
&&\prod_{k=1}^{K}h_{k}\ge\hat{\epsilon},\\ &&\tilde{\tau}\in\tilde{\Delta}_{\alpha,q},\label{17e}
\end{eqnarray}
\end{subequations} where 
\begin{equation}\label{del}
    \tilde{\Delta}_{\alpha,q}=\left\{\tilde{\tau} \in \mathbb{R}^{|\Lambda|} \mid \begin{array}{l}
\sum\limits_{(s,a)\in\Lambda}e^{\tilde{\tau}(s,a)}\left(\delta(s',s)-\alpha p(s'|s,a)\right)=(1-\alpha)q(s'), \forall s',s\in\mathcal{S},a\in\mathcal{A}(s)
\end{array}\right\}. 
\end{equation} It is worth mentioning that the optimization problem \eqref{eq2} is still nonconvex due to \eqref{17e}, though \eqref{17a}-\eqref{17c} have a convex form. This can be illustrated as follows.

For any $\tilde{\tau}_{1}, \tilde{\tau}_{2}\in\tilde{\Delta}_{\alpha,q}$, and $\forall\lambda\in[0,1],s'\in\mathcal{S}$, we have
\begin{subequations}
\begin{eqnarray}
&&\sum\limits_{(s,a)\in\Lambda}e^{\lambda \tilde{\tau}_{1}+(1-\lambda)\tilde{\tau}_{2}}\left(\delta(s',s)-\alpha p(s'|s,a)\right)\\
&=&\sum\limits_{(s,a)\in\Lambda}e^{\tilde{\tau}_1}e^{(\lambda-1)(\tilde{\tau}_{1}-\tilde{\tau}_{2})}\left(\delta(s',s)-\alpha p(s'|s,a)\right)\\
&=&e^{(\lambda-1)(\tilde{\tau}_{1}-\tilde{\tau}_{2})}\sum\limits_{(s,a)\in\Lambda}e^{\tilde{\tau}_1}\left(\delta(s',s)-\alpha p(s'|s,a)\right)\\
&=&e^{(\lambda-1)(\tilde{\tau}_{1}-\tilde{\tau}_{2})}(1-\alpha)q(s')
\end{eqnarray} 
\end{subequations} There exists some $\tilde{\tau}_{1}, \tilde{\tau}_{2}\in\tilde{\Delta}_{\alpha,q}$ and $\lambda\in[0,1]$ such that $e^{(\lambda-1)(\tilde{\tau}_{1}-\tilde{\tau}_{2})}\neq1$, thus the set $\tilde{\Delta}_{\alpha,q}$ is nonconvex and the optimization problem \eqref{eq2} is a nonconvex one.

\end{remark}

\subsection{Sequential convex approximation algorithm for J-DRCCMDP}

Like \cite{hong2011sequential} and \cite{liu2016stochastic}, we can use the sequential convex approximation approach to deal with the nonconvexity of \eqref{gh}. The basic idea of the sequential convex approximation consists in decomposing the original problem into subproblems where a subset of variables is fixed alternatively. For problem \eqref{gh}, we first fix $h=h^{n}$ and update $\tau$ by solving
\begin{subequations}\label{77}
\begin{eqnarray}
& \min\limits_{\tau\in\mathbb{R}_{+}^{|\Lambda|}} & \frac{1}{1-\label{1r}\alpha}\left[-\tau^{\top}\mu_{0}+\sqrt{\rho_{1,0}}\Vert{(\Sigma_0)^{\frac{1}{2}}\tau}\Vert\right]\\
&{\rm s.t.} & \tau^{\top}\mu_{k}-\left(\sqrt{\frac{h_{k}^{n}}{1-h_{k}^{n}}}\sqrt{\rho_{2,k}}+\sqrt{\rho_{1,k}}\right)\Vert{(\Sigma_k)^{\frac{1}{2}}\tau}\Vert\ge\xi_{k},  k=1,2,...,K,\\
&&\tau\in\Delta_{\alpha,q},
\end{eqnarray}
\end{subequations} and then we fix $\tau=\tau^{n}$ and update $h$ by solving
\begin{subequations}\label{88}
\begin{eqnarray}
& \min\limits_{h\in\mathbb{R}^{K}_{+}} & \sum_{k=1}^{K}\psi_{k}h_{k}\\
&{\rm s.t.} & h_{k}\le\frac{\mathscr{A}_{k}^2}{1+\mathscr{A}_{k}^2}, k=1,2,...,K,\\
&&  0\le h_{k}\le 1, k=1,2,...,K,\label{21c}\\
&& \sum_{k=1}^{K}\log{h_{k}}\ge\log{\hat{\epsilon}},\label{21d}
\end{eqnarray}
\end{subequations} where $\mathscr{A}_{k}=\frac{{\tau}^{\top}\mu_{k}-\xi_{k}}{\Vert(\Sigma_k)^{\frac{1}{2}}\tau\Vert\sqrt{\rho_{2,k}}}-\sqrt{\frac{\rho_{1,k}}{\rho_{2,k}}}$ and $\psi_k$ is a given searching direction for $h_k$, $k=1,\ldots,K$. The detailed procedure of the sequential convex approximation algorithm for solving problem \eqref{gh} is given in Algorithm 1.

\begin{algorithm}[htp]
\SetAlgoLined
 \KwData{$\mu_{k}$, $\Sigma_{k}$, $\rho_{1,k}$, $\rho_{2,k}$, $\xi_{k}$, $\Delta_{\alpha,q}$, $n_{max}$, $\gamma$, $\hat{\epsilon}$, $L$, $k=0,1,...,K$.}
  \KwResult{$\tau^{n}$, $V^{n}$.}
Set $n=0$\;
Choose an initial point $h^{0}$ which is feasible for \eqref{21c}-\eqref{21d}\;
\While {$n\le n_{max}$ and $\Vert h^{n-1}-h^{n}\Vert\ge L$}{
Solve problem \eqref{77}; let $\tau^{n}, \theta^{n}, V^{n}$ be an optimal solution, the optimal Lagrangian dual variable and the optimal value of \eqref{77}, respectively\;
Solve problem \eqref{88} with
$$\mathscr{A}_{k}=\frac{{\tau^{n}}^{\top}\mu_{k}-\xi_{k}}{\Vert(\Sigma_k)^{\frac{1}{2}}\tau^{n}\Vert\sqrt{\rho_{2,k}}}-\sqrt{\frac{\rho_{1,k}}{\rho_{2,k}}}, \ \ \psi_{k}=\theta^{n}_{k}\frac{\Vert(\Sigma_k)^{\frac{1}{2}}\tau^{n}\Vert}{2(1-h_{k}^{n})}\sqrt{\frac{\rho_{2,k}}{h_{k}^{n}(1-h_{k}^{n})}},$$ and let $\tilde{h}$ be an optimal solution of \eqref{88}\;
$h^{n+1}\leftarrow h^{n}+\gamma(\tilde{h}-h^{n}), n\leftarrow n+1$. Here, $\gamma\in (0,1)$ is the step size.
}
\caption{Sequential convex approximation algorithm(Problem \eqref{gh})}
\end{algorithm} 

\

\par
 Algorithm 1 can be seen as a specific relization of the alternate convex search or block-relaxation methods \cite{gorski2007biconvex}. By Theorem 2 in \cite{liu2016stochastic}, we know that Algorithm 1 converges in a finite number of iterations and the returned value $V^{n}$ is an upper bound of problem \eqref{gh}. When these sub-problems are all convex, the objective function is continuous, and the feasible set is closed, the alternate convex search algorithm converges monotonically to a partial optimal point (Theorem 4.7 \cite{gorski2007biconvex}). When the objective function is a differentiable and biconvex function, $(h,\tau)$ is a partial optimal point if and only if $(h,\tau)$ is a stationary point (Corollary 4.3 \cite{gorski2007biconvex}). As $\Sigma_{k}$ is positive semi-definite, $\Vert(\Sigma_k)^{\frac{1}{2}}\tau^{n}\Vert$ is a convex function for any $k=1,2,...,K$. Thus both problems \eqref{77} and \eqref{88} are convex programs and Algorithm 1 converges to a stationary point.

\section{Dynamical neural network approach for J-DRCCMDP}\label{DNN_J}
Different from the current methodology, we consider in this section using the DNN approach to solve problem \eqref{gh}.
We show that the equilibrium point of the DNN model corresponds to the KKT point of problem \eqref{gh}. Then we study the stability of the equilibrium point by analyzing the Lyapunov function. Figure 1 shows how we apply the DNN approach to solve the J-DRCCMDP problem.
\begin{figure}[htb]
    \centering
    \includegraphics[width=3cm]{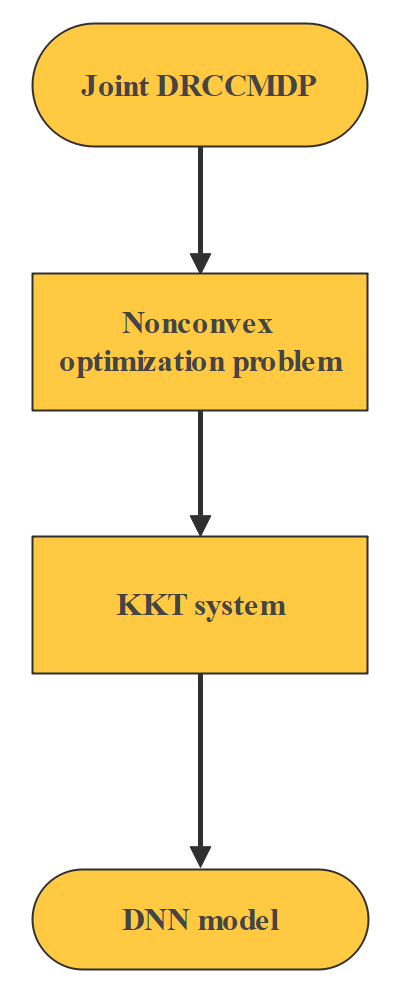}
    \caption{Flowchart of the DNN approach for solving J-DRCCMDP}
\end{figure}

Firstly, we apply the log-transformation $h_{k}=e^{x_k}, k=1,2,...,K$ to problem \eqref{gh} and we get the following reformulation.
\begin{subequations}\label{399}
\begin{eqnarray}
& \min\limits_{\tau\in\mathbb{R}_{+}^{|\Lambda|}, x\in\mathbb{R}_{-}^{K}} & \frac{1}{1-\alpha}\left[-\tau^{\top}\mu_{0}+\sqrt{\rho_{1,0}}\Vert{(\Sigma_0)^{\frac{1}{2}}\tau}\Vert\right]\\
&{\rm s.t.} & \tau^{\top}\mu_{k}-\left(\sqrt{\frac{e^{x_k}}{1-e^{x_k}}}\sqrt{\rho_{2,k}}+\sqrt{\rho_{1,k}}\right)\Vert{(\Sigma_k)^{\frac{1}{2}}\tau}\Vert\ge\xi_{k},  k=1,2,...,K\label{35a}\\
&&x_{k}\le 0,k=1,2,...,K,\\
&&\sum\limits_{k=1}^{K}x_{k}\ge\log\hat{\epsilon},\\ &&\tau\in\Delta_{\alpha,q}.
\end{eqnarray}
\end{subequations}
\begin{lemma}
The optimization problem \eqref{399} is a biconvex optimization problem with respect to $\tau$ and $x$.
\end{lemma}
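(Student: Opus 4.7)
The plan is to verify separately that (i) for fixed $x$, problem \eqref{399} is a convex program in $\tau$, and (ii) for fixed $\tau$, it is a convex program in $x$. Most of this is routine bookkeeping: the objective depends only on $\tau$ and consists of a linear term plus a norm composed with a linear map, so it is convex in $\tau$ and constant (hence convex) in $x$. The feasible set $\Delta_{\alpha,q}$ is linear in $\tau$ by Lemma \ref{th1}, the bound $x_k\le 0$ and the condition $\sum_k x_k\ge\log\hat\epsilon$ are affine, so the only nontrivial constraint is the coupling inequality \eqref{35a}.

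For fixed $x$, I would rewrite \eqref{35a} as $\tau^\top\mu_k - c_k\Vert(\Sigma_k)^{1/2}\tau\Vert\ge\xi_k$ with $c_k := \sqrt{\rho_{2,k}\,e^{x_k}/(1-e^{x_k})} + \sqrt{\rho_{1,k}}\ge 0$. Since $\Sigma_k\succeq 0$, the map $\tau\mapsto\Vert(\Sigma_k)^{1/2}\tau\Vert$ is a norm composed with a linear map, hence convex; scaling by the nonnegative constant $c_k$ and subtracting from a linear term yields a concave-in-$\tau$ left-hand side, so the constraint is convex in $\tau$.

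For fixed $\tau$, set $A_k := \tau^\top\mu_k - \sqrt{\rho_{1,k}}\Vert(\Sigma_k)^{1/2}\tau\Vert - \xi_k$ and $B_k := \sqrt{\rho_{2,k}}\Vert(\Sigma_k)^{1/2}\tau\Vert\ge 0$, and let $\phi(y) := \sqrt{e^y/(1-e^y)}$ for $y<0$. Then \eqref{35a} becomes $A_k - B_k\phi(x_k)\ge 0$; since $B_k\ge 0$, convexity of this constraint in $x_k$ reduces to convexity of $\phi$ on $(-\infty,0)$.

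The hard part is exactly this convexity of $\phi$, because one cannot invoke a monotone composition argument: $y\mapsto e^y/(1-e^y)$ is convex but $\sqrt{\cdot}$ is concave, so that route fails. I would resort to a direct second-derivative check. Writing $\phi(y)=e^{y/2}(1-e^y)^{-1/2}$ and taking logarithmic derivatives gives $\phi'(y)/\phi(y) = 1/[2(1-e^y)]$, hence $\phi'(y)=e^{y/2}/[2(1-e^y)^{3/2}]$; differentiating once more yields
\[
\phi''(y) \;=\; \frac{e^{y/2}\,(1+2e^y)}{4\,(1-e^y)^{5/2}},
\]
which is strictly positive for every $y<0$. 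Thus $\phi$ is convex on $(-\infty,0)$, so $x_k\mapsto A_k - B_k\phi(x_k)$ is concave and the feasible set it defines in $x$ is convex. Combining (i) and (ii) establishes biconvexity of \eqref{399}.
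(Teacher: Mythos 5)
Your proof is correct and follows essentially the same route as the paper: fix $x$ to get a second-order cone (hence convex) program in $\tau$, and for fixed $\tau$ reduce the coupling constraint to convexity of $y\mapsto\sqrt{e^y/(1-e^y)}$ on $(-\infty,0)$, verified by the same second-derivative computation (your $\phi''(y)=e^{y/2}(1+2e^y)/[4(1-e^y)^{5/2}]$ agrees with the paper's expression after simplification). The only difference is presentational: you spell out why a monotone-composition argument is unavailable and give slightly more detail on the fixed-$x$ case.
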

\begin{proof}
    For fixed $x\in \mathbb{R}_{-}^{K}$, \eqref{399} is a SOCP problem, thus it is convex. We just need to show the convexity of \eqref{399} with fixed $\tau\in \mathbb{R}_{+}^{|\Lambda|}$, which requires the convexity of \eqref{35a}. When $\tau$ is fixed, \eqref{35a} can be written as $\phi_k(x_k)\le 0$, where $$\phi_{k}(x_k)=\left(\sqrt{\frac{e^{x_k}}{1-e^{x_k}}}\sqrt{\rho_{2,k}}+\sqrt{\rho_{1,k}}\right)\Vert{(\Sigma_k)^{\frac{1}{2}}\tau}\Vert-\tau^{\top}\mu_{k}+\xi_{k}.$$ 

    We have that $$\phi_{k}^{'}(x_k)=\frac{e^{x_k}}{2\sqrt{\frac{e^{x_k}}{1-e^{x_k}}}(1-e^{x_k})^{2}}\sqrt{\rho_{2,k}}\Vert{(\Sigma_k)^{\frac{1}{2}}\tau}\Vert,$$ and $$\phi_{k}^{''}(x_k)=\frac{e^{x_k}(2e^{x_k}+1)}{4\sqrt{\frac{e^{x_k}}{1-e^{x_k}}}(1-e^{x_k})^3}\sqrt{\rho_{2,k}}\Vert{(\Sigma_k)^{\frac{1}{2}}\tau}\Vert.$$ As $x\in \mathbb{R}_{-}^{K}$, then $\phi_{k}^{''}(x_k)\ge 0$, $\forall x_{k}\le 0$. Therefore 
    \eqref{399} is a convex optimization problem when $\tau$ is fixed. Thus \eqref{399} is biconvex.
\end{proof}

\subsection{KKT conditions}
Let $f(\tau)=\frac{1}{1-\alpha}\left[-\tau^{\top}\mu_{0}+\sqrt{\rho_{1,0}}\Vert{(\Sigma_0)^{\frac{1}{2}}\tau}\Vert\right]$, $\phi_{k}(\tau, x)=\left(\sqrt{\frac{e^{x_k}}{1-e^{x_k}}}\sqrt{\rho_{2,k}}+\sqrt{\rho_{1,k}}\right)\Vert{(\Sigma_k)^{\frac{1}{2}}\tau}\Vert-\tau^{\top}\mu_{k}+\xi_{k}$, $g_{k}(x)=x_{k}$, $k=1,...,K$, $h(x)=\log\hat{\epsilon}-\sum\limits_{k=1}^{K}x_{k}$, $\omega_{s}(\tau)=\sum\limits_{(s',a')\in\Lambda}\tau(s',a')\left(\delta(s,s')-\alpha p(s|s',a')\right)-(1-\alpha)q(s)$, $s\in S$ and $\nu(\tau)=-\tau$. We can then rewrite problem \eqref{399} as follows:
\begin{subequations}\label{37}
\begin{eqnarray}
& \min\limits_{\tau\in\mathbb{R}_{+}^{|\Lambda|}, x\in\mathbb{R}_{-}^{K}} & f(\tau) \\
&{\rm s.t.} & \phi_{k}(\tau, x)\le 0, k=1,...,K,\\
&& g_{k}(x)\le 0, k=1,...,K,\\
&& h(x)\le 0,\\
&& \omega_{s}(\tau)\le 0, s\in S, \\
&& -\omega_{s}(\tau)\le 0, s\in S, \\
&& \nu(\tau)\le 0.
\end{eqnarray}
\end{subequations}

The feasible set of problem \eqref{37} is denoted by 
$$U=\left\{(\tau, x)\in \mathbb{R}_{+}^{|\Lambda|}\times \mathbb{R}_{-}^{K} | \phi_{k}(\tau, x)\le 0, g_{k}(x)\le 0,  k=1,...,K, h(x)\le 0, \omega_{s}(\tau)=0, s\in S, \nu(\tau)\le 0 \right\}.$$ Let $$U(\tau)=\left\{ x\in \mathbb{R}_{-}^{K} | \phi_{k}(\tau, x)\le 0, g_{k}(x)\le 0, k=1,...,K, h(x)\le 0 \right\}$$ and $$U(x)=\left\{ \tau\in \mathbb{R}_{+}^{|\Lambda|} | \phi_{k}(\tau, x)\le 0,  k=1,...,K, \omega_{s}(\tau)=0, s\in S, \nu(\tau)\le 0\right\}.$$ 

\begin{definition}
$(\tau^{*}, x^{*})$ is called a partial optimum of problem \eqref{37} if $f(\tau^{*})\le f(\tau), \forall \tau \in U(x^{*})$ and $x^{*}\in U(\tau^{*})$.
\end{definition}

\begin{lemma}[\cite{jiang2021partial}]
In biconvex optimization problems, there always exists a partial optimal solution.
\end{lemma}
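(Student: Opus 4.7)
My primary plan is to observe that, under the standard standing assumptions of biconvex optimization --- the feasible set $B \subseteq X \times Y$ is a nonempty compact biconvex set and the biconvex objective $f$ is continuous --- existence of a partial optimum is almost immediate from Weierstrass's extreme value theorem. By compactness and continuity, $f$ attains its global minimum on $B$ at some $(\tau^{*}, x^{*})$. I would then verify directly that this global minimizer is a partial optimum: for every $\tau$ with $(\tau, x^{*}) \in B$ one has $f(\tau, x^{*}) \ge f(\tau^{*}, x^{*})$ by global optimality, and symmetrically for the $x$-slice. Biconvexity of $B$ and $f$ is not actually needed for this existence step; it enters the theory only later, to guarantee convexity of each slice subproblem and hence tractability of alternating schemes.

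Because the biconvex structure is the raison d'\^{e}tre of the lemma, I would also sketch the constructive alternating convex search (ACS) proof, which is the form of argument likely used in \cite{jiang2021partial}. Starting from any $(\tau^{0}, x^{0}) \in B$, one iterates by letting $\tau^{n+1}$ be a minimizer of $f(\,\cdot\,, x^{n})$ over the $\tau$-slice $B_{x^{n}} := \{\tau : (\tau, x^{n}) \in B\}$ and then letting $x^{n+1}$ be a minimizer of $f(\tau^{n+1}, \,\cdot\,)$ over the $x$-slice $B_{\tau^{n+1}}$. Each subproblem is convex (by biconvexity of $B$ and $f$) and attains its minimum by compactness of the slices and continuity of $f$. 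The values $f(\tau^{n}, x^{n})$ form a monotone non-increasing, bounded-below sequence, and compactness of $B$ yields a convergent subsequence with limit $(\tau^{*}, x^{*}) \in B$.

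The main obstacle would be verifying that the ACS limit $(\tau^{*}, x^{*})$ is genuinely a partial optimum, since the $\tau$-optimality at step $n+1$ is relative to $x^{n}$ rather than to the limit $x^{*}$. I would resolve this via a Berge maximum-theorem argument: closedness of the graph of the slice correspondence $x \mapsto B_{x}$ (inherited from closedness of $B$) together with continuity of $f$ yields continuity of the parametric optimal value $v(x) := \min_{\tau \in B_{x}} f(\tau, x)$ and upper/lower semicontinuity of the associated minimizer correspondence. Passing to the limit along the convergent subsequence in the ACS inequality $f(\tau^{n_{j}+1}, x^{n_{j}}) \le f(\tau, x^{n_{j}})$ (approximating arbitrary elements of $B_{x^{*}}$ by feasible perturbations in $B_{x^{n_{j}}}$), together with the symmetric step for $x$, then delivers the two slice-optimality conditions at $(\tau^{*}, x^{*})$. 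Under the simpler Weierstrass route above, of course, this obstacle is bypassed entirely and the existence statement follows in one line.
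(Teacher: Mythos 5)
The paper never proves this lemma---it is imported verbatim from \cite{jiang2021partial} and used as a black box---so there is no in-paper argument to compare against; the only question is whether your argument is sound. Your primary route is correct and is essentially the standard one-line proof: under compactness of the feasible set and continuity of the objective (both of which hold for problem \eqref{37} here, since $\Delta_{\alpha,q}$ is a bounded polytope and the constraints $x_k\le 0$, $\sum_{k}x_k\ge\log\hat{\epsilon}$ cut out a compact set), Weierstrass yields a global minimizer, and a global minimizer is trivially a partial optimum because optimality along each slice is a weaker requirement than optimality over all of $U$; biconvexity is irrelevant to existence. You are also right to flag that the bare statement is false without some such hypothesis (e.g.\ minimize $f(\tau,x)=\tau$ over $\mathbb{R}\times\mathbb{R}$, where no partial optimum exists), so the implicit standing assumptions should be stated. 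The one soft spot is your supplementary ACS sketch: closedness of $B$ gives only a closed graph, hence upper semicontinuity of the slice correspondence $x\mapsto B_{x}$, whereas Berge's maximum theorem also requires lower semicontinuity, which does not come for free; this is exactly why \cite{gorski2007biconvex} imposes extra hypotheses (such as uniqueness of the subproblem minimizers) before an ACS accumulation point is guaranteed to be a partial optimum. Since that sketch is offered only as an optional alternative and the Weierstrass argument already closes the lemma, the proposal stands.
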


Now we derive the KKT conditions of problem \eqref{37}. Let $(\tau^{*}, x^{*})\in\mathbb{R}_{+}^{|\Lambda|}\times \mathbb{R}_{-}^{K}$, if there exist $\beta_{k}^{(\tau)}$, $\beta_{k}^{(x)}$, $\chi_{k}$, $k=1,...,K,$ $\zeta$, $\theta_{1,s}$, $\theta_{2,s}$, $s\in S$ and $\varrho$, such that 
\begin{subequations}\label{000}
\begin{eqnarray}
&& \nabla f(\tau^{*})+\sum\limits_{k=1}^{K}\beta_{k}^{(\tau)}\nabla_{\tau}\phi_{k}(\tau^{*}, x^{*})+\sum\limits_{s\in S}(\theta_{1,s}-\theta_{2,s})\nabla\omega_{s}(\tau^{*})+\varrho\nabla\nu(\tau^{*})=0, \\
&& \sum\limits_{k=1}^{K}\beta_{k}^{(x)}\nabla_{x}\phi_{i}(\tau^{*}, x^{*})+\sum\limits_{k=1}^{K}\chi_{k}\nabla g_{k}(x^{*})+\zeta\nabla h(x^{*})=0, \\
&& \beta_{k}^{(\tau)}\ge 0, \ \ \beta_{k}^{(\tau)}\phi_{k}(\tau^{*}, x^{*})=0, \ \ k=1,...,K, \\ 
&&  \beta_{k}^{(x)}\ge 0, \ \ \beta_{k}^{(x)}\phi_{k}(\tau^{*}, x^{*})=0, \ \ k=1,...,K,\\ 
&&  \chi_{k}\ge 0, \ \ \chi_{k} g_{k}(x^{*})=0, \ \ k=1,...,K,\\ 
&& \zeta\ge 0, \ \ \zeta h(x^{*})=0, \\
&& \theta_{1,s}\ge 0,\ \ \theta_{1,s}\omega_{s}(\tau^{*})=0, \ \ s\in S,\\
&& \theta_{2,s}\ge 0, \ \ \theta_{2,s}\omega_{s}(\tau^{*})=0,  \ \ s\in S,\\
&& \varrho\ge 0, \ \ \varrho\nu(\tau^{*})=0,
\end{eqnarray}
\end{subequations} then $(\tau^{*}, x^{*})$ is called a partial KKT point of problem \eqref{37}, where $\beta_{k}^{(\tau)}$, $\beta_{k}^{(x)}$, $\chi_{k}$, $\zeta$, $\theta_{1,s}$, $\theta_{2,s}$ and $\varrho$ are the Lagrange multipliers associated with the constraints $\phi_{k}, g_k, h, \omega_{s}, -\omega_{s}, \nu$ with respect to the two sub problems with fixed $\tau$ or $x,$ respectively. Furthermore, we have the following lemma which gives the optimal KKT conditions of problem \eqref{37}.
\begin{lemma}[\cite{jiang2021partial}]\label{KKT}
    Let $\beta_{k}=\beta_{k}^{(\tau)}=\beta_{k}^{(x)}$, $\forall k=1,2,...,K$, then the partial optimum $(\tau^{*}, x^{*})$ is a KKT point of \eqref{37}. The KKT system of problem \eqref{37} is
\begin{subequations}
\begin{eqnarray}
&& \nabla f(\tau^{*})+\sum\limits_{k=1}^{K}\beta_{k}\nabla_{\tau}\phi_{k}(\tau^{*}, x^{*})+\sum\limits_{s\in S}(\theta_{1,s}-\theta_{2,s})\nabla\omega_{s}(\tau^{*})+\varrho\nabla\nu(\tau^{*})=0, \\
&& \sum\limits_{k=1}^{K}\beta_{k}\nabla_{x}\phi_{k}(\tau^{*}, x^{*})+\sum\limits_{k=1}^{K}\chi_{k}\nabla g_{k}(x^{*})+\zeta\nabla h(x^{*})=0, \\
&& \beta_{k}\ge 0, \ \ \beta_{k}\phi_{k}(\tau^{*}, x^{*})=0, \ \ \beta_{k}\phi_{k}(\tau^{*}, x^{*})=0, \ \ k=1,...,K, \\ 
&&  \chi_{k}\ge 0, \ \ \chi_{k} g_{k}(x^{*})=0, \ \ k=1,...,K,\\ 
&& \zeta\ge 0, \ \ \zeta h(x^{*})=0, \\
&& \theta_{1,s}\ge 0,\ \ \theta_{1,s}\omega_{s}(\tau^{*})=0, \ \ \theta_{2,s}\ge 0, \ \ \theta_{2,s}\omega_{s}(\tau^{*})=0,\ \ s\in S,\\
&& \varrho\ge 0, \ \ \varrho\nu(\tau^{*})=0.
\end{eqnarray}
\end{subequations}    
\end{lemma}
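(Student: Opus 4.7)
The plan is to derive the joint KKT system by combining the two convex subproblem KKT systems that come from partial optimality, and then to invoke the biconvex partial-KKT result of jiang2021partial to identify the repeated multipliers.

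First I would unpack the definition of partial optimum. By definition, $\tau^{*}$ minimizes $f$ over $U(x^{*})$, and $x^{*}$ lies in $U(\tau^{*})$; since $f$ does not depend on $x$, $x^{*}$ is in particular optimal for the $x$-subproblem. By the preceding lemma both subproblems are convex, so under a standard constraint qualification, for instance Slater's condition for the inequality constraints $\phi_{k}, g_{k}, h, \nu$ together with full row rank of the equality system $\omega_{s}=0$, the KKT conditions are necessary for each subproblem. Applying them separately would yield the two groups of multipliers $(\beta_{k}^{(\tau)},\theta_{1,s},\theta_{2,s},\varrho)$ and $(\beta_{k}^{(x)},\chi_{k},\zeta)$ that constitute the partial KKT system \eqref{000}.

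Second, I would write the Lagrangian of the joint problem \eqref{37},
$$L(\tau,x;\beta,\chi,\zeta,\theta_{1},\theta_{2},\varrho) = f(\tau) + \sum_{k=1}^{K}\beta_{k}\phi_{k}(\tau,x) + \sum_{k=1}^{K}\chi_{k}g_{k}(x) + \zeta h(x) + \sum_{s\in \mathcal{S}}(\theta_{1,s}-\theta_{2,s})\omega_{s}(\tau) + \varrho\nu(\tau),$$
and observe that $\nabla_{\tau}L=0$ and $\nabla_{x}L=0$ reproduce term by term the first two equations of the claimed KKT system. Hence, once we set $\beta_{k}:=\beta_{k}^{(\tau)}=\beta_{k}^{(x)}$ as in the hypothesis of the lemma, the two partial KKT systems collapse into a single KKT system for \eqref{37}; the remaining complementarity and non-negativity conditions carry over verbatim from \eqref{000}.

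The crucial step, and the main obstacle, is precisely the identification $\beta_{k}^{(\tau)}=\beta_{k}^{(x)}$. For inactive indices (those $k$ with $\phi_{k}(\tau^{*},x^{*})<0$), complementary slackness in both subproblems immediately forces $\beta_{k}^{(\tau)}=\beta_{k}^{(x)}=0$, so the identification is automatic. For active indices this identification is \emph{not} automatic from partial optimality alone: it is precisely the content of the partial-KKT result of jiang2021partial for biconvex programs, which asserts that under mild regularity one can always select multipliers that agree across the two subproblems on the shared constraints. I would therefore close the proof by invoking that result, after verifying that its regularity hypotheses hold here. The verification reduces to a Slater-type condition on \eqref{37}, which is straightforward under natural standing assumptions on $\mu_{k}$, $\Sigma_{k}$, $\rho_{1,k}$, $\rho_{2,k}$, $\xi_{k}$, and on the initial distribution $q$ that guarantees $\Delta_{\alpha,q}$ has nonempty relative interior.
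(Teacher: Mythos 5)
The paper does not actually prove this lemma: it is imported verbatim from the cited reference on partial exactness for biconvex programming, and the only place it is used (in the proof of Theorem~\ref{4}) is after the partial KKT system \eqref{000} has already been obtained with $\beta^{(\tau)}=\beta^{(x)}$, at which point the conclusion is read off by substitution. So there is no in-paper argument to compare against; what you have written is a reconstruction, and it is essentially correct. Your key structural observation is the right one: under the stated hypothesis $\beta_k=\beta_k^{(\tau)}=\beta_k^{(x)}$, the two stationarity conditions of \eqref{000} are literally the two block equations $\nabla_\tau L=0$ and $\nabla_x L=0$ of the joint Lagrangian of \eqref{37}, and the sign and complementarity conditions carry over unchanged, so the ``proof'' is a one-line merge. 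You also correctly isolate the genuinely nontrivial content — whether common multipliers for the active $\phi_k$ can be \emph{chosen} at a partial optimum — and correctly delegate it to the cited reference rather than claiming it follows from partial optimality alone. That separation is exactly where the mathematical weight sits, and the paper silently buries it in the citation.

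Two soft spots worth tightening. First, the $x$-subproblem has a constant objective (since $f$ depends only on $\tau$), so its KKT conditions are satisfiable with all multipliers zero; the force of the hypothesis is that a \emph{nonzero common} choice $\beta_k^{(x)}=\beta_k^{(\tau)}$ still satisfies the $x$-stationarity equation, which is an assumption, not something you derive. Second, your appeal to Slater's condition cannot apply as stated to the equality constraints $\omega_s(\tau)=0$, which appear in \eqref{37} as the pair $\omega_s\le 0$, $-\omega_s\le 0$ and admit no strictly feasible point; since $\omega_s$ and $\nu$ are affine in $\tau$, you should instead invoke a refined Slater condition requiring strict feasibility only for the nonlinear constraints $\phi_k$. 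Neither issue invalidates the argument, and the paper itself verifies no constraint qualification at all.
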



\subsection{Neural network model}
In this section, we introduce a DNN model to solve problem \eqref{37}. 
Let $\phi(\tau, x)=(\phi_{1}(\tau,x), ..., \phi_{K}(\tau,x))^{\top},$ $g(x)=(g_{1}(x), ..., g_{K}(x))^{\top}$ and $\omega(\tau)=(\omega_{s}(\tau))^{\top}_{s\in S},$ we define the dynamical equations of the DNN as
\begin{equation}\label{ujm}
\begin{aligned}
\frac{d\tau}{dt}= & -\left(\nabla f(\tau)+\nabla_{\tau}\phi(\tau,x)^{\top}(\beta+\phi(\tau,x))^{+}+\nabla\omega(\tau)^{\top}(\theta_{1}+\omega(\tau))^{+}-\nabla\omega(\tau)^{\top}(\theta_{2}-\omega(\tau))^{+}\right. \\
& \left.+\nabla\nu(\tau)^{\top}(\varrho+\nu(\tau))^{+} \right),\\
\frac{dx}{dt}=& -\left(\nabla_{x}\phi(\tau,x)^{\top}(\beta+\phi(\tau,x))^{+}+\nabla g(x)^{\top}(\chi+g(x))^{+}+\nabla h(x)^{\top}(\zeta+h(x))^{+}\right),\\
 \frac{d\beta}{dt}=& (\beta+\phi(\tau, x))^{+}-\beta,\\
 \frac{d\chi}{dt}=& (\chi+g(x))^{+}-\chi,\\
 \frac{d\zeta}{dt}=& (\zeta+h(x))^{+}-\zeta,\\
 \frac{d\theta_1}{dt}=& (\theta_1+\omega(\tau))^{+}-\theta_1,\\
 \frac{d\theta_2}{dt}=& (\theta_2-\omega(\tau))^{+}-\theta_2,\\
 \frac{d\varrho}{dt}=& (\varrho+\nu(\tau))^{+}-\varrho.
\end{aligned}
\end{equation}

Let $z=(\tau, x, \beta, \chi, \zeta, \theta_1, \theta_2, \varrho)$, then the dynamical system can be written as 
\begin{equation}\label{27}
\left\{\begin{array}{l}
\frac{dz}{dt}=\kappa\varphi(z), \\
z(t_0)=z_0, 
\end{array}\right.
\end{equation} where
$$\varphi(z)=
\left[\begin{array}{l}
\varphi_{1}(z)\\
\varphi_{2}(z)\\
\varphi_{3}(z)\\
\varphi_{4}(z)\\
\varphi_{5}(z)\\
\varphi_{6}(z)\\
\varphi_{7}(z)\\
\varphi_{8}(z)
\end{array}\right]=
\left[\begin{array}{l}
-\big(\nabla f(\tau)+\nabla_{\tau}\phi(\tau,x)^{\top}(\beta+\phi(\tau,x))^{+}+
\nabla\omega(\tau)^{\top}(\theta_{1}+\omega(\tau))^{+}-\nabla\omega(\tau)^{\top}(\theta_{2}-\omega(\tau))^{+}\\ +\nabla\nu(\tau)^{\top}(\varrho+\nu(\tau))^{+} \big) \\
-\big(\nabla_{x}\phi(\tau,x)^{\top}(\beta+\phi(\tau,x))^{+}+\nabla g(x)^{\top}(\chi+g(x))^{+}+\nabla h(x)^{\top}(\zeta+h(x))^{+}\big)\\
(\beta+\phi(\tau, x))^{+}-\beta\\
(\chi+g(x))^{+}-\chi\\
(\zeta+h(x))^{+}-\zeta\\
(\theta_1+\omega(\tau))^{+}-\theta_1\\
(\theta_2-\omega(\tau))^{+}-\theta_2\\
(\varrho+\nu(\tau))^{+}-\varrho
\end{array}\right],
$$ $z_0$ is a given initial point and $\kappa$ is a scale parameter that indicates the convergence rate of the DNN model \eqref{ujm}. For the sake of simplicity, we set $\kappa=1$. Figure 2 shows the structure of our proposed DNN model,  {where the frame in the middle illustrates the cyclic structure of the neural network. For example, the arrow from $\dot{\tau}=\varphi_{1}$ to $\dot{\theta_1}=\varphi_6$ means that 
the in-process solution $\tilde{\tau}$ which is generated by $\dot{\tau}=\varphi_{1}$ 
will be used to update $\theta_{1}$ by  $\dot{\theta_1}=\varphi_6$.
These arrows among eight differential equations constitute the cyclic structure of the neural network.}
\begin{figure}[htb]
    \centering
    \includegraphics[width=12cm]{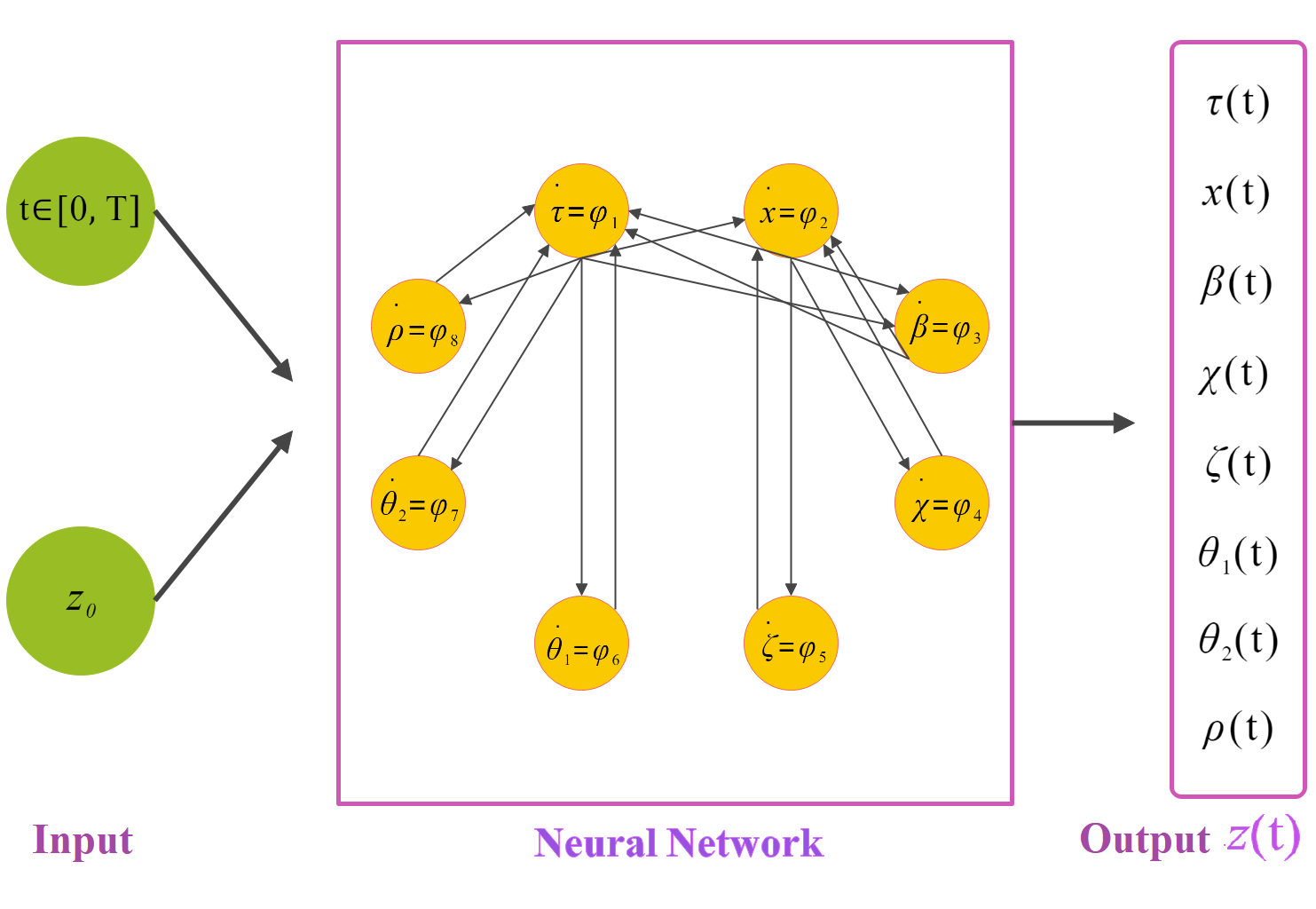}
    \caption{The structure of DNN model}
    
\end{figure}

\begin{theorem}\label{4}
Let $(\tau^*, x^*, \beta^*, \chi^*, \zeta^*, \theta_1^*, \theta_2^*, \varrho^*)$ be an equilibrium point of the neural network \eqref{ujm}, then $(\tau^*, x^*)$ is a KKT point of problem \eqref{37}. On the other hand, if $(\tau^*, x^*)$ is a KKT point of problem \eqref{37}, then there exist $\tau^*\ge 0, x^*\ge 0, \beta^*\ge 0, \chi^*\ge 0, \zeta^*\ge 0, \theta_1^*\ge 0, \theta_2^*\ge 0, \varrho^*\ge 0$ such that $(\tau^*, x^*, \beta^*, \chi^*, \zeta^*, \theta_1^*, \theta_2^*, \varrho^*)$ is an equilibrium point of the DNN model \eqref{27}.
\end{theorem}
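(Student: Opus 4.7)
The plan is to exploit the standard projection identity, which asserts that for real scalars (and componentwise for vectors), the equation $(b+a)^+ = b$ holds if and only if $b \ge 0$, $a \le 0$, and $ab = 0$. This identity converts each of the last six component-equations in \eqref{ujm} into a nonnegativity condition, a primal feasibility condition, and a complementary slackness condition. Both directions of the theorem then reduce to matching these complementarity conditions against the KKT system in Lemma \ref{KKT}.

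For the forward direction, suppose $(\tau^*, x^*, \beta^*, \chi^*, \zeta^*, \theta_1^*, \theta_2^*, \varrho^*)$ is an equilibrium of \eqref{ujm}. Setting $d\beta/dt = 0$ gives $(\beta^* + \phi(\tau^*, x^*))^+ = \beta^*$, which by the projection identity yields $\beta^* \ge 0$, $\phi(\tau^*, x^*) \le 0$, and $\beta_k^* \phi_k(\tau^*, x^*) = 0$ for each $k$. The same argument applied to the equations for $\chi$, $\zeta$, $\theta_1$, $\theta_2$, and $\varrho$ recovers all remaining sign, feasibility, and complementarity conditions in Lemma \ref{KKT}. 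In particular, the $\theta_1$ block gives $\omega(\tau^*) \le 0$ while the $\theta_2$ block gives $-\omega(\tau^*) \le 0$, which together force the equality $\omega(\tau^*) = 0$.

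Having established these six identities, I substitute them back into the first two component-equations. Because each $^+$ operator collapses (for example, $(\beta^* + \phi(\tau^*, x^*))^+ = \beta^*$), the equations $d\tau/dt = 0$ and $dx/dt = 0$ become
\begin{align*}
\nabla f(\tau^*) + \nabla_\tau \phi(\tau^*, x^*)^\top \beta^* + \nabla \omega(\tau^*)^\top (\theta_1^* - \theta_2^*) + \nabla \nu(\tau^*)^\top \varrho^* &= 0, \\
\nabla_x \phi(\tau^*, x^*)^\top \beta^* + \nabla g(x^*)^\top \chi^* + \nabla h(x^*)^\top \zeta^* &= 0,
\end{align*}
which are precisely the stationarity equations of Lemma \ref{KKT}. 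Hence $(\tau^*, x^*)$ is a KKT point of \eqref{37}. For the converse, take a KKT point $(\tau^*, x^*)$ with multipliers $\beta^*, \chi^*, \zeta^*, \theta_1^*, \theta_2^*, \varrho^*$ supplied by Lemma \ref{KKT}; I use the projection identity in the opposite direction to conclude that every $^+$-equation in \eqref{ujm} is satisfied at the tuple, and then the stationarity conditions of Lemma \ref{KKT} give $d\tau/dt = dx/dt = 0$.

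I do not anticipate a serious obstacle: the entire argument is a systematic bookkeeping exercise on the eight blocks of the dynamical system, driven by a single elementary lemma. The only mild subtlety is treating the equality constraint $\omega(\tau) = 0$ via its two inequality reformulations $\omega(\tau) \le 0$ and $-\omega(\tau) \le 0$, whose nonnegative multipliers $\theta_1^*$ and $\theta_2^*$ combine as the signed difference $\theta_1^* - \theta_2^*$ to play the role of the unrestricted multiplier for the equality. Once this correspondence is recorded, the two directions of the theorem are simply mirror images of each other.
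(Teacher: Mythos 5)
Your proposal is correct and follows essentially the same route as the paper: both arguments rest on the projection identity $(b+a)^+=b \iff b\ge 0,\ a\le 0,\ ab=0$ applied blockwise to the multiplier equations, followed by matching against the partial KKT system of Lemma \ref{KKT} (with $\beta^{(\tau)}=\beta^{(x)}=\beta^*$). If anything, your explicit substitution of the collapsed projections back into the $d\tau/dt=0$ and $dx/dt=0$ equations to recover the stationarity conditions is spelled out more carefully than in the paper's own proof.
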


\begin{proof}
Let $(\tau^*, x^*, \beta^*, \chi^*, \zeta^*, \theta_1^*, \theta_2^*, \varrho^*)$ be an equilibrium point of the neural network \eqref{ujm}, then $\frac{d\tau^*}{dt}=\frac{dx^*}{dt}=\frac{d\beta^*}{dt}=\frac{d\chi^*}{dt}=\frac{d\zeta^*}{dt}=\frac{d\theta_1^*}{dt}=\frac{d\theta_2^*}{dt}=\frac{d\varrho^*}{dt}=0$. Thus we have 
 $$
 \begin{aligned}
 & -\left(\nabla f(\tau^*)+\nabla_{\tau}\phi(\tau^*,x^*)^{\top}(\beta^*+\phi(\tau^*,x^*))^{+}+\nabla\omega(\tau^*)^{\top}(\theta_{1}^*+\omega(\tau^*))^{+}-\nabla\omega(\tau^*)^{\top}(\theta_{2}^*-\omega(\tau^*))^{+}\right.\\
 &\qquad \left.+\nabla\nu(\tau^*)^{\top}(\varrho^*+\nu(\tau^*))^{+} \right)=0,\\
 & -\big(\nabla_{x}\phi(\tau^*,x^*)^{\top}(\beta^*+\phi(\tau^*,x^*))^{+}+\nabla g(x^*)^{\top}(\chi^*+g(x^*))^{+}+\nabla h(x^*)^{\top}(\zeta^*+h(x^*))^{+}\big)=0,\\
& (\beta^*+\phi(\tau^*, x^*))^{+}-\beta^*=0, \ \ (\chi^*+g(x^*))^{+}-\chi^*=0, \ \  (\zeta^*+h(x^*))^{+}-\zeta^*=0,\\
& (\theta_1^*+\omega(\tau^*))^{+}-\theta_1^*=0, \ \ (\theta_2^*-\omega(\tau^*))^{+}-\theta_2^*=0, \ \ (\varrho^*+\nu(\tau^*))^{+}-\varrho^*=0.
 \end{aligned}
 $$ Notice that $(\beta^*+\phi(\tau^*, x^*))^{+}-\beta^*=0$ if and only if $\beta^{*}\ge 0, \phi(\tau^*, x^*)\le 0$ and ${\beta^{*}}^{\top}\phi(\tau^{*},x^{*})=0$. Following the same way, we have $\chi^{*}\ge 0, g(\tau^*, x^*)\le 0$, ${\chi^{*}}^{\top}g(x^{*})=0$, $\zeta^{*}\ge 0, h( x^*)\le 0$, ${\zeta^{*}}^{\top}h(x^{*})=0$, $\theta_{1}^{*}\ge 0, \omega(\tau^*)\le 0$, ${\theta_{1}^{*}}^{\top}\omega(\tau^{*})=0$, $\theta_{2}^{*}\ge 0, -\omega(\tau^*)\le 0$, ${\theta_{2}^{*}}^{\top}\omega(\tau^{*})=0$, $\varrho^{*}\ge 0, \nu(\tau^*)\le 0$, ${\varrho^{*}}^{\top}\nu(\tau^{*})=0$, which implicates \eqref{ujm}. Thus we obtain the partial KKT system \eqref{000} with $\beta^{(\tau)}$=$\beta^{(x)}=\beta^{*}$. By Lemma \ref{KKT}, $(\tau^{*}, x^{*})$ is the KKT point. The converse conclusion is straightforward.
\end{proof}

\begin{theorem}
For any initial point $z_{0}=(\tau_0, x_0, \chi_0, \zeta_0, \theta_{1}^{0}, \theta_{2}^{0}, \varrho_0)$, there exists a unique continuous solution $z(t)=(\tau(t), x(t), \chi(t), \zeta(t), \theta_{1}(t), \theta_{2}(t), \varrho(t))$ for \eqref{ujm}.
\end{theorem}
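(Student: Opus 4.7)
The plan is to invoke the classical Picard–Lindel\"of (Cauchy–Lipschitz) theorem for the initial value problem \eqref{27}. The only nontrivial verification is that the vector field $\varphi(z)$ is locally Lipschitz continuous on a neighbourhood of $z_0$; once this is established, standard ODE theory delivers a unique $C^1$ (in particular continuous) solution on a maximal interval containing $t_0$, and a prolongation argument extends it. The proof is therefore structured as: (i) decompose $\varphi$ into manageable pieces, (ii) verify local Lipschitzness on an appropriate domain, (iii) apply Picard–Lindel\"of, (iv) prolong to $[t_0,\infty)$.

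First I would examine $\varphi$ component by component. The positive-part operator $u\mapsto u^{+}$ is globally $1$-Lipschitz, so every term of the form $(\,\cdot\,)^{+}$ inherits the Lipschitz constant of its argument. The maps $\omega(\tau)$, $\nu(\tau)$, $g(x)$, $h(x)$ are affine, and their Jacobians $\nabla\omega$, $\nabla\nu$, $\nabla g$, $\nabla h$ are therefore constant matrices/vectors, contributing only globally Lipschitz terms. Consequently the only potentially delicate ingredients are $\nabla f(\tau)$, the constraint values $\phi_{k}(\tau,x)$, and the gradients $\nabla_{\tau}\phi_{k}(\tau,x)$ and $\nabla_{x}\phi_{k}(\tau,x)$.

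The main obstacle is the non-smoothness of $\Vert(\Sigma_k)^{1/2}\tau\Vert$ at $\tau=0$ and the singular behaviour of $\sqrt{e^{x_k}/(1-e^{x_k})}$ as $x_k\uparrow 0$. To handle both, I would restrict attention to the open set
\[
\Omega=\bigl\{(\tau,x,\beta,\chi,\zeta,\theta_{1},\theta_{2},\varrho): \tau\neq 0,\ x_{k}<0\ \text{for every } k\bigr\},
\]
which contains every natural initialization $z_0$ of the DNN, because feasibility of \eqref{37} together with $\prod_{k}e^{x_k}\ge\hat\epsilon$ forces $x_k<0$ and the constraint set $\Delta_{\alpha,q}$ keeps $\tau$ away from the origin. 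On $\Omega$ the map $\tau\mapsto\Vert(\Sigma_k)^{1/2}\tau\Vert$ is $C^\infty$ with gradient $\Sigma_k\tau/\Vert(\Sigma_k)^{1/2}\tau\Vert$, and $x_k\mapsto\sqrt{e^{x_k}/(1-e^{x_k})}$ is $C^\infty$. Finite sums, products, and compositions with the $1$-Lipschitz operator $(\,\cdot\,)^{+}$ of $C^{1}$ functions remain $C^{1}$, and hence locally Lipschitz on any compact subset of $\Omega$.

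Combining these observations, $\varphi$ is locally Lipschitz on $\Omega$, so the Picard–Lindel\"of theorem yields a unique $C^{1}$ solution $z(t)$ on a maximal open interval around $t_0$. For global existence on $[t_0,\infty)$, I would then argue that the trajectory cannot escape $\Omega$ in finite time: the dual coordinates $\beta,\chi,\zeta,\theta_{1},\theta_{2},\varrho$ satisfy ODEs with at-most linear-growth right-hand sides, excluding finite-time blow-up, while the Lyapunov analysis developed in the sequel keeps $(\tau,x)$ in a bounded subset of $\Omega$ and bounded away from $\tau=0$ and the hyperplanes $x_k=0$. A standard prolongation argument then upgrades local existence to a unique continuous solution on $[t_0,\infty)$, completing the proof.
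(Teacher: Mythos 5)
Your proposal follows essentially the same route as the paper: establish local Lipschitz continuity of $\varphi$, apply Picard--Lindel\"of for local existence and uniqueness, then extend by a continuation argument. The difference is one of care rather than strategy. The paper simply asserts that $\nabla f$, $\nabla_\tau\phi_k$, $\nabla_x\phi_k$, etc.\ are continuously differentiable and concludes local Lipschitzness, glossing over exactly the two singularities you identify --- the non-differentiability of $\Vert(\Sigma_k)^{1/2}\tau\Vert$ where $(\Sigma_k)^{1/2}\tau=0$ and the blow-up of $\sqrt{e^{x_k}/(1-e^{x_k})}$ as $x_k\uparrow 0$ --- so your restriction to the open set $\Omega$ is a genuine improvement on that step. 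On the other hand, your global-existence step is the weaker half: invoking ``the Lyapunov analysis developed in the sequel'' is somewhat circular (that analysis presupposes a globally defined trajectory), and the Lyapunov function $V$ only bounds $\Vert z-z^*\Vert$; it does not by itself keep $\tau$ away from the kernel of $(\Sigma_k)^{1/2}$ or $x_k$ away from $0$, which is what you need for the trajectory to remain in $\Omega$. To be fair, the paper's own continuation step (``for any bounded closed interval, $z$ is bounded'') is asserted rather than proved, so neither argument fully closes the prolongation gap; a complete proof would need an a priori bound showing the flow cannot reach the boundary of $\Omega$ in finite time.
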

\begin{proof}
As $\nabla f(\tau), \nabla_{\tau}\phi_{k}(\tau, x), \nabla_{x}\phi_{k}(\tau, x), \nabla g_{k}(x), \nabla h(x), \nabla\omega_{s}(\tau), \nabla\nu(\tau)$ are all continuously differentiable, then all terms of \eqref{ujm} are locally Lipschitz continuous. {Therefore $\varphi(z)$ is locally Lipschitz continuous with respect to $z$.} For any given closed interval $[0, M_1]$, by the local existence theorem of ODE (Picard–Lindelöf Theorem \cite{hu2005theory}), the neural network \eqref{ujm} has a unique continuous solution when $t\in [0,M_1]$. As for any bounded closed interval, $z$ is bounded. Then we know from the continuation theorem \cite{hu2005theory} that the unique continuous solution in $[0,M_1]$ can be extended to interval $[0,+\infty)$, which completes this proof.
\end{proof}

\subsection{Stability analysis}
In this section, we study the stability of the proposed DNN model. To prove our DNN model's stability and convergence, we first show that the Jacobian matrix $\nabla \varphi(z)$ is negative semidefinite.

\begin{lemma}\label{i9}
    The Jacobian matrix $\nabla \varphi(z)$ is a negative semidefinite matrix. 
\end{lemma}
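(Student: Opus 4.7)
The plan is to compute the Jacobian $\nabla\varphi(z)$ in block form and establish negative semidefiniteness of its symmetric part $\tfrac{1}{2}(\nabla\varphi + \nabla\varphi^{\top})$, which is what ``NSD'' means for a non-symmetric matrix of this sort. I would partition $z = (p,\lambda)$ with primal block $p = (\tau,x)$ and dual block $\lambda = (\beta, \chi, \zeta, \theta_1, \theta_2, \varrho)$, paired with the stacked constraint list $c(p) = (\phi(\tau,x),\, g(x),\, h(x),\, \omega(\tau),\, -\omega(\tau),\, \nu(\tau))$.

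The first step is to recognize \eqref{ujm} as the primal-descent/dual-ascent flow of the augmented Lagrangian
\[
L_A(p,\lambda) \;=\; f(\tau) \;+\; \tfrac{1}{2}\sum_i \bigl\{ [(\lambda_i + c_i(p))^+]^2 - \lambda_i^2 \bigr\}.
\]
A chain-rule computation on the positive-part terms confirms $\varphi = (-\nabla_p L_A,\, \nabla_\lambda L_A)$, hence
\[
\nabla\varphi \;=\; \begin{pmatrix} -H_{pp} & -H_{p\lambda}\\ H_{p\lambda}^{\top} & H_{\lambda\lambda} \end{pmatrix},
\]
with $H_{pp} = \nabla_p^2 L_A$, $H_{\lambda\lambda} = \nabla_\lambda^2 L_A$, and $H_{p\lambda} = \nabla_\lambda\nabla_p L_A$, the transposed relation in the bottom-left block following from Clairaut's theorem. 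Consequently the symmetric part is block-diagonal, $\tfrac{1}{2}(\nabla\varphi + \nabla\varphi^{\top}) = \mathrm{diag}(-H_{pp},\, H_{\lambda\lambda})$, and the cross-block $H_{p\lambda}$ drops out entirely.

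Next I would handle the two diagonal blocks. For the dual block, $\partial\varphi_j/\partial\lambda_i$ with $j\in\{3,\ldots,8\}$ yields $\mathbf{1}_{\{\lambda_i + c_i > 0\}} - 1 \in \{-1, 0\}$ on the diagonal and $0$ off the diagonal, so $H_{\lambda\lambda}$ is diagonal with non-positive entries and hence NSD. For the primal block,
\[
H_{pp} \;=\; \nabla^2 f(\tau) \;+\; \sum_i \Bigl( \mathbf{1}_{\{\lambda_i + c_i > 0\}}\, \nabla_p c_i\, \nabla_p c_i^{\top} \;+\; (\lambda_i + c_i)^{+}\, \nabla_p^2 c_i \Bigr),
\]
in which $\nabla^2 f(\tau) \succeq 0$ by convexity of $f$, the outer-product pieces are PSD as rank-one matrices, and the affine constraints $g_k$, $h$, $\omega_s$, $\nu_i$ contribute nothing through $\nabla_p^2 c_i$.

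The main obstacle is the $\phi_k$ contribution $\sum_k (\beta_k + \phi_k)^{+}\, \nabla_p^2 \phi_k$: since $\phi_k$ is only biconvex in $(\tau,x)$, the Hessian $\nabla_p^2 \phi_k$ need not be PSD in isolation. To close this gap I would exploit the explicit separable form $\phi_k(\tau,x) = u_k(x_k)\,\|(\Sigma_k)^{1/2}\tau\| - \tau^{\top}\mu_k + \xi_k$ with $u_k(x_k) = \sqrt{e^{x_k}/(1-e^{x_k})}\sqrt{\rho_{2,k}} + \sqrt{\rho_{1,k}}$, so that $\nabla_p^2\phi_k$ has nonzero entries only in the $\tau$-$\tau$, $\tau$-$x_k$, and $x_k$-$x_k$ slots. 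Then I would treat the combined block $\mathbf{1}_{\{\beta_k+\phi_k>0\}}\,\nabla_p\phi_k\,\nabla_p\phi_k^{\top} + (\beta_k+\phi_k)^{+}\,\nabla_p^2\phi_k$ jointly and reduce via a Schur complement on the $x_k$ coordinate, invoking $u_k > 0$ and $u_k'' > 0$ on $x_k<0$ together with convexity of $\|(\Sigma_k)^{1/2}\cdot\|$, to verify its PSDness. Summing over $k$ yields $H_{pp} \succeq 0$, so that $-H_{pp} \preceq 0$, and together with $H_{\lambda\lambda} \preceq 0$ from the dual block this gives $\tfrac{1}{2}(\nabla\varphi + \nabla\varphi^{\top}) \preceq 0$.
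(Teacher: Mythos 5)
Your structural setup is correct and coincides with the paper's: the coupling blocks between the primal variables $(\tau,x)$ and the multipliers are skew-symmetric, so they drop out of the symmetric part of $\nabla\varphi(z)$, and the multiplier--multiplier block is diagonal with entries in $\{-1,0\}$. The gap is in the primal block, precisely at the step you yourself flag as the main obstacle: the combined matrix $M_k=\mathbf{1}_{\{\beta_k+\phi_k>0\}}\nabla_p\phi_k\nabla_p\phi_k^{\top}+(\beta_k+\phi_k)^{+}\nabla_p^2\phi_k$ is \emph{not} positive semidefinite in general, so the Schur-complement verification you propose cannot go through. Writing $\phi_k=u_k(x_k)N(\tau)-\tau^{\top}\mu_k+\xi_k$ with $N(\tau)=\Vert(\Sigma_k)^{1/2}\tau\Vert$, test $M_k$ against the direction $(d,e)=(\tau,t)$ with $t\in\mathbb{R}$ placed in the $x_k$ slot. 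Positive homogeneity of $N$ gives $\nabla^2N(\tau)\,\tau=0$ and $\tau^{\top}\nabla N(\tau)=N$, so with $c:=u_kN-\tau^{\top}\mu_k$ and $s:=(\beta_k+\phi_k)^{+}$ the quadratic form equals
\[
(c+t\,u_k'N)^2+sN\bigl(2t\,u_k'+t^2u_k''\bigr).
\]
At any point with $c=0$ and $s>0$ (nothing excludes $\tau^{\top}\mu_k=u_k(x_k)N(\tau)$ while $\beta_k+\phi_k>0$), this reduces to $t^2\bigl((u_k'N)^2+sNu_k''\bigr)+2t\,sNu_k'$, which is negative for small $t<0$ because $u_k'>0$. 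So the rank-one Gram term does not dominate the indefinite part of the full Hessian of the biconvex $\phi_k$, and the remaining contributions to $H_{pp}$ ($\nabla^2f$, whose kernel contains $\tau$, and the affine-constraint Gram terms, whose indicators may all be inactive) cannot be relied on to repair this along that direction. The assertion $H_{pp}\succeq 0$ is therefore not established, and the proof as proposed is incomplete.

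For comparison, the paper's proof handles this point by a different route: it never forms the full Hessian $\nabla_p^2\phi_k$, but only the partial Hessians $\nabla^2_{\tau}\phi^{K_1}$ and $\nabla^2_{x}\phi^{K_1}$, which are PSD by biconvexity; it concludes that the diagonal sub-blocks $A_1$ and $B_2$ are each NSD and then declares the block matrix $A=\bigl(\begin{smallmatrix}A_1&B_1^{\top}\\ B_1&B_2\end{smallmatrix}\bigr)$ NSD. That inference leaves the mixed block $B_1$ (which carries $\nabla_{\tau}\nabla_{x}\phi^{K_1}{}^{\top}(\beta+\phi)^{K_1}$ and the cross Gram term) unaccounted for, so the difficulty you isolated is genuine and is exactly where the argument must do real work; but the fix you sketch does not close it.
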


\begin{proof}
For any $z$, we assume that there exist index sets $K_1,K_2,S_1,S_2$ such that
$$(\beta_k+\phi_k)^{+}=\left\{\begin{array}{l}
\beta_{k}+\phi_{k}(\tau,x),  k\in K_{1}, \\
0, k\notin K_{1},
\end{array}\right.
(\chi_k+g_k)^{+}=\left\{\begin{array}{l}
\chi_{k}+g_{k}(x),  k\in K_{2}, \\
0, k\notin K_{2},
\end{array}\right.
$$
$$({\theta_{1,s}}+\omega_s)^{+}=\left\{\begin{array}{l}
\theta_{1,s}+\omega_{s}(\tau),  s\in S_{1}, \\
0, s\notin S_{1},
\end{array}\right.
({\theta_{2,s}}-\omega_s)^{+}=\left\{\begin{array}{l}
\theta_{2,s}-\omega_{s}(\tau),  s\in S_{2}, \\
0, s\notin S_{2}.
\end{array}\right.
$$
Then we have $(\beta+\phi)^{+}=\left\{(\beta_k+\phi_k)^{+}\right\}_{k=1,2,...,K}$, $(\chi+g)^{+}=\left\{(\chi_k+g_k)^{+}\right\}_{k=1,2,...,K}$, $(\theta_1+\omega)^{+}=\left\{(\theta_{1,s}+\omega_s)^{+}\right\}_{s\in S}$ and $(\theta_2-\omega)^{+}=\left\{(\theta_{2,s}-\omega_s)^{+}\right\}_{s\in S}$.
For the sake of simplicity, we define $\phi^{K_1}(\tau,x)=(\phi_{k}(\tau,x))_{k\in K_1}$, $(\beta+\phi)^{K_1}=(\beta_k+\phi_k)_{k\in K_1}$, $g^{K_2}(x)=(g_{k}(x))_{k\in K_2}$, $(\chi+g)^{K_2}=(\chi_{k}+g_{k})_{k\in K_2}$, $\omega^{S_1}(\tau)=(\omega_{s}(\tau))_{s\in S_1}$, $(\theta_1+\omega)^{S_1}=(\theta_{1,s}+\omega_{s})_{s\in S_1}$, $\omega^{S_2}(\tau)=(\omega_{s}(\tau))_{s\in S_2}$ and $(\theta_2-\omega)^{S_2}=(\theta_{2,s}-\omega_{s})_{s\in S_2}$. Without loss of generality, we consider the case with $\zeta+h(x)\neq 0, \varrho+\nu(\tau)\neq 0$. We represent the Jacobian matrix $\nabla \varphi$ as follows
\begin{equation}
\nabla \varphi(z)=\left[\begin{array}{llllllll}
A_1 & A_2 & A_3 & A_4 & A_5 & A_6 & A_7 & A_8 \\
B_1 & B_2 & B_3 & B_4 & B_5 & B_6 & B_7 & B_8 \\
C_1 & C_2 & C_3 & C_4 & C_5 & C_6 & C_7 & C_8 \\
D_1 & D_2 & D_3 & D_4 & D_5 & D_6 & D_7 & D_8 \\
E_1 & E_2 & E_3 & E_4 & E_5 & E_6 & E_7 & E_8 \\
F_1 & F_2 & F_3 & F_4 & F_5 & F_6 & F_7 & F_8 \\
G_1 & G_2 & G_3 & G_4 & G_5 & G_6 & G_7 & G_8 \\
H_1 & H_2 & H_3 & H_4 & H_5 & H_6 & H_7 & H_8 
\end{array}\right],
\end{equation} where

$A_1=-(\nabla^{2} f(\tau)+\nabla_{\tau}^{2}\phi^{K_1}(\tau,x)^{\top}(\beta+\phi)^{K_1}+\nabla_{\tau}\phi^{K_1}(\tau,x)^{\top}\nabla_{\tau}\phi^{K_1}(\tau,x)+\nabla^{2}\omega^{S_1}(\tau)^{\top}(\theta_{1}+\omega)^{S_1}+\nabla \omega^{S_1}(\tau)^{\top}\nabla\omega^{S_1}(\tau)-\nabla^{2}\omega^{S_2}(\tau)^{\top}(\theta_{2}-\omega)^{S_2}+\nabla\omega^{S_2}(\tau)^{\top}\nabla\omega^{S_2}(\tau)+\nabla^{2}\nu(\tau)(\varrho+\nu)+\nabla\nu(\tau)^{\top}\nabla\nu(\tau)),$\\

$A_2=-(\nabla_{x}\nabla_{\tau}\phi^{K_1}(\tau,x)^{\top}(\beta+\phi)^{K_1}+\nabla_{x}\phi^{K_1}(\tau,x)^{\top}\nabla_{\tau}\phi^{K_1}(\tau,x)),$\\

$B_1=-(\nabla_{\tau}\nabla_{x}\phi^{K_1}(\tau,x)^{\top}(\beta+\phi)^{K_1}+\nabla_{\tau}\phi^{K_1}(\tau,x)^{\top}\nabla_{x}\phi^{K_1}(\tau,x)),$\\

$B_2=-(\nabla_{x}^{2}\phi^{K_1}(\tau,x)^{\top}(\beta+\phi)^{K_1}+\nabla_{x}\phi^{K_1}(\tau,x)^{\top}\nabla_{x}\phi^{K_1}(\tau,x)+\nabla^{2}g^{K_2}(x)^{\top}(\chi+g)^{K_2}+\nabla g^{K_2}(x)^{\top}\nabla g^{K_2}(x)+\nabla^{2}h(x)(\zeta+h)+\nabla h(x)^{\top}\nabla h(x)),$\\

$A_{3}=-C_{1}=-\nabla_{\tau}\phi^{K_1}(\tau,x)^{\top},$ \ \ $A_{4}=-D_{1}=0,$ \ \ $A_{5}=-E_{1}=0,$

$A_{6}=-F_{1}=-\nabla\omega^{S_1}(\tau)^{\top},$ \ \ $A_{7}=-G_{1}=\nabla\omega^{S_2}(\tau)^{\top},$ \ \ $A_{8}=-H_{1}=-\nabla\nu(\tau)^{\top},$

$B_{3}=-C_{2}=-\nabla_{x}\phi^{K_1}(\tau,x)^{\top},$ \ \ $B_{4}=-D_{2}=-\nabla g^{K_2}(x)^{\top},$ \ \ $B_{5}=-E_{2}=-\nabla h(x)^{\top},$

$B_{6}=-F_{2}=0,$ \ \ $B_{7}=-G_{2}=0,$ \ \ $B_{8}=-H_{2}=0,$\\

$
\begin{aligned}
C_{3}=\left[\begin{array}{ccccc}
\ddots & & & & 0 \\
& -1 & & & \\
 &  & \ddots & & \\
 &  & & -1 & \\
0 & & & & \ddots
\end{array}\right],\ \ 
D_{4}=\left[\begin{array}{ccccc}
\ddots & & & & 0 \\
& -1 & & & \\
 &  & \ddots & & \\
 &  & & -1 & \\
0 & & & & \ddots
\end{array}\right],
\end{aligned}
$ \\

$
\begin{aligned}
F_{6}=\left[\begin{array}{ccccc}
\ddots & & & & 0 \\
& -1 & & & \\
 &  & \ddots & & \\
 &  & & -1 & \\
0 & & & & \ddots
\end{array}\right],\ \ 
G_{7}=\left[\begin{array}{ccccc}
\ddots & & & & 0 \\
& -1 & & & \\
 &  & \ddots & & \\
 &  & & -1 & \\
0 & & & & \ddots
\end{array}\right],
\end{aligned}
$ 


$C_{4}=0, C_{5}=0, C_{6}=0, C_{7}=0, C_{8}=0, D_{3}=0, D_{5}=0, D_{6}=0, D_{7}=0, D_{8}=0, F_{3}=0, $

$F_{4}=0, F_{5}=0, F_{7}=0, F_{8}=0, G_{3}=0, G_{4}=0, G_{5}=0, G_{6}=0, G_{8}=0,$ \\
where $C_{3}(k,k)=-1$ if $k\in K_1$, $D_{4}(k,k)=-1$ if $k\in K_2$, $F_{6}(s,s)=-1$ if $s\in S_1$, $G_{7}(s,s)=-1$ if $s\in S_2$ and all other elements in $C_3, D_4, F_6, G_7$ are equal to $0$.

Since $\phi$ is twice differentiable, by Schwarz's theorem, we have $\nabla_{\tau}\nabla_{x}\phi^{K_1}(\tau, x)=\nabla_{x}\nabla_{\tau}\phi^{K_1}(\tau, x)$. Then we get $A_{2}=B_{1}^{\top}$, so $\nabla \varphi(z)$ becomes

\begin{equation}
\nabla \varphi(z)=\left[\begin{array}{llllllll}
A_1 & B_1^{\top} & A_3 & 0 & 0 & A_6 & A_7 & A_8 \\
B_1 & B_2 & B_3 & B_4 & B_5 & 0 & 0 & 0 \\
-A_3 & -B_3 & C_{3} & 0 & 0 & 0 & 0 & 0 \\
0 & -B_4 & 0 & D_{4} & 0 & 0 & 0 & 0 \\
0 & -B_5 & 0 & 0 & 0 & 0 & 0 & 0 \\
-A_6 & 0 & 0 & 0 & 0 & F_{6} & 0 & 0 \\
-A_7 & 0 & 0 & 0 & 0 & 0 & G_{7} & 0 \\
-A_8 & 0 & 0 & 0 & 0 & 0 & 0 & 0 
\end{array}\right].
\end{equation}

It is clear that $C_3, D_4, F_6, G_7$ are negative semidefinite. Thus $$ \boldsymbol{S}=\left[\begin{array}{llllll}
C_{3} & 0 & 0 & 0 & 0 & 0 \\
0 & D_{4} & 0 & 0 & 0 & 0 \\
0 & 0 & 0 & 0 & 0 & 0 \\
0 & 0 & 0 & F_{6} & 0 & 0 \\
0 & 0 & 0 & 0 & G_{7} & 0 \\
0 & 0 & 0 & 0 & 0 & 0 
\end{array}\right]$$  is negative semidefinite. 

Since the function $f$ is convex and twice differentiable,  $\nabla^{2}f(\tau)$ is positive semidefinite. Furthermore, $\phi$ is biconvex, $g, h, \omega, \nu$ are convex and $\phi, g, h, \omega, \nu$ are all twice differentiable. Thus by \cite{gorski2007biconvex}, we have that $\nabla^{2}_{\tau}\phi^{K_1}(\tau, x), \nabla^{2}_{x}\phi^{K_1}(\tau, x)$, $\nabla^{2}g^{K_2}(x)$, $\nabla^{2}\omega^{S_1}(\tau)$, $\nabla^{2}\omega^{S_2}(\tau)$, $\nabla^{2}h(x), \nabla^{2}\nu(\tau)$ are all positive semidefinite. It follows that $A_1, B_2$ are negative semidefinite and then $A=\left[\begin{array}{ll}
A_1 & B_1^{\top} \\
B_1 & B_2
\end{array}\right]$ is negative semidefinite. 

Let $B=\left[\begin{array}{llllll}
A_3 & 0 & 0 & A_6 & A_7 & A_8 \\
B_3 & B_4 & B_5 & 0 & 0 & 0
\end{array}\right]$, then $\nabla \varphi(z)$ can be written as $\nabla \varphi(z)=\left[\begin{array}{ll}
A & B \\
-B^{\top} & \boldsymbol{S}
\end{array}\right]$. Since $A, \boldsymbol{S}$ are both negative semidefinite, it is then known from \cite{foias1990positive} that $\nabla \varphi(z)$ is negative semidefinite. This establishes the desired result. The proof when $\zeta+h(x)=0$ or $\varrho+\nu(\tau)=0$ follows similar lines.
\end{proof}

Before showing the stability of the DNN model, we introduce the following definition and lemma.

\begin{definition}[\cite{rockafellar2009variational}]
    A mapping $F: \mathbb{R}^n\rightarrow \mathbb{R}^n$ is said to be monotonic if $$(x-y)^{\top}(F(x)-F(y))\ge 0, \forall x,y\in\mathbb{R}^n.$$
\end{definition}

\begin{lemma}[\cite{rockafellar2009variational}]\label{i8}
    A differentiable mapping $F:\mathbb{R}^n\rightarrow \mathbb{R}^n$ is monotonic if and only if the Jacobian matrix $\nabla F(x), \forall x\in \mathbb{R}^n$ is positive semidefinite.
\end{lemma}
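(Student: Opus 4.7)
The plan is to establish both directions of the equivalence by standard variational-analysis arguments, each reducing to the scalar test $v^{\top}\nabla F(x)v\ge 0$.

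For the ``if'' direction, I would fix arbitrary $x,y\in\mathbb{R}^n$ and apply the fundamental theorem of calculus to the $C^{1}$ path $t\mapsto F(x+t(y-x))$, which gives
\[
F(y)-F(x)=\int_0^1\nabla F(x+t(y-x))(y-x)\,dt.
\]
Taking the inner product with $y-x$ and invoking the hypothesized pointwise positive semidefiniteness of $\nabla F$ inside the integral makes each integrand nonnegative, so $(y-x)^{\top}(F(y)-F(x))\ge 0$, which is exactly monotonicity.

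For the ``only if'' direction, I would fix $x,v\in\mathbb{R}^n$ and apply monotonicity to the pair $(x,x+tv)$ with $t>0$, producing $t\,v^{\top}(F(x+tv)-F(x))\ge 0$. Dividing by $t^{2}>0$ and letting $t\downarrow 0$, differentiability of $F$ at $x$ gives $\bigl(F(x+tv)-F(x)\bigr)/t\to \nabla F(x)v$, hence $v^{\top}\nabla F(x)v\ge 0$. Since $v$ and $x$ are arbitrary, $\nabla F(x)$ is positive semidefinite at every $x$.

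The argument is routine and I do not anticipate any serious obstacle. The only mild subtlety is the convention that, for a possibly non-symmetric matrix $A$, positive semidefiniteness means $v^{\top}Av\ge 0$ for all $v$ (equivalent to positive semidefiniteness of the symmetric part $(A+A^{\top})/2$). Both directions of the proof produce inequalities in exactly this form, so no symmetrization step is needed, and the result transfers cleanly to the use made of it in Lemma~\ref{i9}, where $\nabla\varphi(z)$ is generally non-symmetric.
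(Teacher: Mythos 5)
Your argument is correct and is the standard one; note that the paper itself gives no proof of Lemma~\ref{i8} at all, citing it directly from Rockafellar and Wets, so your proposal supplies a self-contained derivation where the paper relies on a reference. Two small remarks. First, your ``if'' direction applies the fundamental theorem of calculus along what you call a $C^{1}$ path, which quietly strengthens the hypothesis from ``differentiable'' to ``continuously differentiable'' (otherwise the Jacobian need not be integrable along the segment). This is easily repaired without the extra assumption: set $g(t)=(y-x)^{\top}F(x+t(y-x))$, observe that $g'(t)=(y-x)^{\top}\nabla F(x+t(y-x))(y-x)\ge 0$ for every $t$, and conclude $g(1)\ge g(0)$ because a function with nonnegative derivative is nondecreasing; this yields monotonicity with no integral. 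Your ``only if'' direction is exactly right. Second, your closing observation about the convention $v^{\top}Av\ge 0$ for non-symmetric $A$ is precisely the point that matters downstream: in Lemma~\ref{i9} the Jacobian $\nabla\varphi(z)$ has an antisymmetric off-diagonal block, and Theorem~\ref{55} uses both the quadratic form $\varphi^{\top}(\nabla\varphi^{\top}+\nabla\varphi)\varphi\le 0$ and the monotonicity of $-\varphi$, so the quadratic-form formulation you adopt is the one the paper needs.
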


\begin{theorem}\label{55}
The DNN model \eqref{ujm} is stable in the Lyapunov sense and converges to $(\tau^*, x^*, \beta^*, \chi^*, \zeta^*, \theta_1^*, \theta_2^*, \varrho^*)$, where $(\tau^*, x^*)$ is a KKT point of problem \eqref{37}.
\end{theorem}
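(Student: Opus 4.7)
The plan is to use the classical Lyapunov-function-plus-LaSalle route, leveraging the negative semidefiniteness of the Jacobian already established in Lemma \ref{i9}. First I would pick an arbitrary equilibrium $z^{*} = (\tau^{*}, x^{*}, \beta^{*}, \chi^{*}, \zeta^{*}, \theta_{1}^{*}, \theta_{2}^{*}, \varrho^{*})$ of the neural network \eqref{ujm}, so that $\varphi(z^{*}) = 0$. By Theorem \ref{4} the component $(\tau^{*}, x^{*})$ is already a KKT point of \eqref{37}, so any convergence result to $z^{*}$ immediately translates into convergence to a KKT point.

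For the Lyapunov candidate I would take the quadratic
\begin{equation*}
V(z) = \tfrac{1}{2}\|z - z^{*}\|^{2},
\end{equation*}
which is nonnegative, radially unbounded, and vanishes only at $z^{*}$. Differentiating along trajectories of \eqref{27} and using $\varphi(z^{*}) = 0$,
\begin{equation*}
\dot V(z) = (z - z^{*})^{\top}\varphi(z) = (z - z^{*})^{\top}\bigl(\varphi(z) - \varphi(z^{*})\bigr).
\end{equation*}
By Lemma \ref{i9} the Jacobian $\nabla\varphi(z)$ is negative semidefinite, hence $\nabla(-\varphi)(z)$ is positive semidefinite, and Lemma \ref{i8} then yields monotonicity of $-\varphi$, i.e.\ $(z-z^{*})^{\top}(\varphi(z) - \varphi(z^{*})) \le 0$. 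Therefore $\dot V(z) \le 0$ along every trajectory, which gives Lyapunov stability of $z^{*}$.

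To upgrade stability to convergence, I would apply LaSalle's invariance principle. The inequality $V(z(t)) \le V(z(0))$ forces every trajectory to stay in a bounded sublevel set, so the $\omega$-limit set is nonempty, compact and positively invariant. Trajectories therefore approach the largest invariant subset $\Omega \subseteq \{z : \dot V(z) = 0\}$. On $\Omega$ the monotonicity inequality is saturated, and combined with the invariance of $\Omega$ under the flow $\dot z = \varphi(z)$ one deduces $\varphi(z) = 0$ throughout $\Omega$, so $\Omega$ consists entirely of equilibria. Invoking Theorem \ref{4} once more identifies each such equilibrium with a KKT point of \eqref{37}, completing the proof.

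The main obstacle will be the last LaSalle step. Because of the projection operators $(\cdot)^{+}$, the field $\varphi$ is only piecewise smooth and $\nabla\varphi$ is set-valued on the boundaries where arguments such as $\beta_{k}+\phi_{k}$, $\chi_{k}+g_{k}$, $\zeta + h$, $\theta_{1,s}+\omega_{s}$, $\theta_{2,s}-\omega_{s}$, or $\varrho+\nu$ vanish. So the monotonicity argument really needs to be read in the Clarke-generalized sense and carried out region-by-region using the block structure of $\nabla\varphi$ assembled in Lemma \ref{i9}; one must argue that on each smooth piece the quadratic form $(z-z^{*})^{\top}\nabla\varphi(z)(z-z^{*})$ can vanish only in directions belonging to the kernel of $\nabla\varphi$, and that these kernel directions are killed by the invariance requirement. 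A secondary but lighter point is simply noting that $V$ is radially unbounded, which keeps the trajectory inside a compact set where the Picard--Lindelöf hypotheses of the previous theorem remain valid, so the flow is globally defined and LaSalle is applicable.
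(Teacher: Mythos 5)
Your stability argument matches the first half of the paper's proof: both rest on the monotonicity of $-\varphi$ obtained from Lemma \ref{i9} and Lemma \ref{i8}, and the inequality $(z-z^{*})^{\top}\varphi(z)\le 0$ gives $\dot V\le 0$. The divergence is in the choice of Lyapunov function and, consequently, in how convergence is extracted. The paper works with the augmented function $V(z)=\Vert\varphi(z)\Vert^{2}+\frac{1}{2}\Vert z-z^{*}\Vert^{2}$ rather than your bare quadratic; the extra term contributes $\varphi^{\top}(\nabla\varphi(z)^{\top}+\nabla\varphi(z))\varphi\le 0$ to $\dot V$ and ties the vanishing of $\dot V$ much more directly to the vanishing of $\varphi$ itself, which is exactly what the LaSalle step needs.

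The genuine gap is in your convergence step, and it is twofold. First, as you concede yourself, with only $V(z)=\frac{1}{2}\Vert z-z^{*}\Vert^{2}$ the set $\{z:\dot V(z)=0\}=\{z:(z-z^{*})^{\top}\varphi(z)=0\}$ can be strictly larger than the zero set of $\varphi$ when $-\varphi$ is merely monotone rather than strictly monotone, and your ``region-by-region kernel'' sketch is not carried out; the paper sidesteps this by building $\Vert\varphi(z)\Vert^{2}$ into the Lyapunov function so that the saturation of $\dot V=0$ constrains $\varphi$ directly. Second --- and this you do not flag at all --- even granting that the largest invariant set $\Omega$ consists of equilibria, LaSalle only yields convergence of $z(t)$ to the \emph{set} $\Omega$, whereas Theorem \ref{55} asserts convergence to a single point. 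The paper closes this with a re-centering argument you omit: since $V(z(t))\ge\frac{1}{2}\Vert z(t)-z^{*}\Vert^{2}$ and $V$ is non-increasing, the trajectory is bounded and admits a subsequential limit $\hat z$, which LaSalle identifies as an equilibrium; one then defines $\hat V(z)=\Vert\varphi(z)\Vert^{2}+\frac{1}{2}\Vert z-\hat z\Vert^{2}$, notes that $\hat V(z(t))$ is non-increasing with a subsequence of values tending to $\hat V(\hat z)=0$, concludes $\hat V(z(t))\to 0$, and reads off $z(t)\to\hat z$ from $\frac{1}{2}\Vert z(t)-\hat z\Vert^{2}\le\hat V(z(t))$. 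Without that last step your argument establishes Lyapunov stability and approach to a set of KKT points, not the pointwise convergence claimed in the statement.
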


\begin{proof}
Let $z^{*}=(\tau^*, x^*, \beta^*, \chi^*, \zeta^*, \theta_1^*, \theta_2^*, \varrho^*)$ be an equilibrium point of \eqref{ujm}. We define the following Lyapunov function:
$$V(z)=\Vert \varphi(z)\Vert^2+\frac{1}{2}\Vert z-z^{*}\Vert^2.$$ We have that 
$$\frac{dV(z(t))}{dt}=(\frac{d\varphi}{dt})^{\top}\varphi+\varphi^{\top}\frac{d\varphi}{dt}+(z-z^*)^{\top}\frac{dz}{dt}.$$ Since $\frac{dz}{dt}=\varphi (z)$ by \eqref{27}, we have $\frac{d\varphi}{dt}=\frac{d \varphi}{d z}\frac{dz}{dt}=\nabla \varphi(z)\varphi(z)$ and thus $$\frac{dV(z(t))}{dt}=\varphi^{\top}(\nabla \varphi(z)^{\top}+\nabla \varphi(z))\varphi+(z-z^*)^{\top}\varphi(z).$$ By Lemma \ref{i9}, $\nabla \varphi$ is negative semidefinite, then we have $\varphi^{\top}(\nabla \varphi(y)^{\top})\varphi\le 0$, $\varphi^{\top}(\nabla \varphi(y))\varphi\le 0$. By Theorem \ref{4}, the equilibrium point $z^{*}$ satisfies $\varphi(z^{*})=\frac{dz^*}{dt}=0$. Then by Lemma \ref{i9} and Lemma \ref{i8}, $-\varphi$ is monotonic. Therefore, we have $(z-z^*)^{\top}\varphi(z)=(z-z^*)^{\top}(\varphi(z)-\varphi(z^*))\le 0$. This means that $\frac{dV(z(t))}{dt}\le 0$. Therefore the DNN model \eqref{ujm} is stable at $z^*$ in the sense of Lyapunov.

Since $V(z)\ge \frac{1}{2}\Vert z-z^*\Vert^2$, there exists a convergent subsequence $(z(t_k)_{k\ge 0})$ such that $\lim\limits_{k\rightarrow \infty}z(t_k)=\hat{z}$. Let $X=\{z(t)|\frac{dV(z(t))}{dt}=0\}$. By \cite{slotine1991applied}, using LaSalle's invariance principle, we have that any solution starting from a given $z_0$ converges to the largest invariant set contained in $X$. Notice that $$\left\{\begin{array}{l}
\frac{d\tau}{dt}=0 \\
\frac{dx}{dt}=0 \\
\frac{d\beta}{dt}=0 \\
\frac{d\chi}{dt}=0 \\
\frac{d\zeta}{dt}=0 \\
\frac{d\theta_1}{dt}=0 \\
\frac{d\theta_2}{dt}=0 \\
\frac{d\varrho}{dt}=0 \\
\end{array} \Leftrightarrow \frac{dV(z(t))}{dt}=0.\right.$$ It follows that $\hat{z}$ is an equilibrium point of \eqref{ujm}.

Next we consider a new Lyapunov function defined by $\hat{V}(z)=\Vert \varphi(z)\Vert^2+\frac{1}{2}\Vert z-\hat{z}\Vert^2$. Since $\hat{V}$ is continuously differentiable, $\hat{V}(\hat{z})=0$ and $\lim\limits_{k\rightarrow\infty}z(t_{k})=\hat{z}$, then we have $\lim\limits_{t\rightarrow\infty}\hat{V}(z(t))=\hat{V}(\hat{z})=0$. Furthermore, by $\hat{V}(z(t))\ge \frac{1}{2}\Vert z-\hat{z}\Vert^{2}$, we have $\lim\limits_{t\rightarrow \infty}\Vert z-\hat{z}\Vert=0$ and $\lim\limits_{t\rightarrow \infty}z(t)=\hat{z}$. It can then be deduced that the DNN model \eqref{ujm} is convergent in the sense of Lyapunov to an equilibrium point $\hat{z}=(\hat{\tau}, \hat{x}, \hat{\beta}, \hat{\chi}, \hat{\zeta}, \hat{\theta_1}, \hat{\theta_2}, \hat{\varrho})$ where $(\hat{\tau}, \hat{x})$ is a KKT point of \eqref{37}. Then we finish the proof.

\end{proof}

\section{Numerical experiments}\label{numerical}
In Section \ref{es}, we give the setup for the numerical experiments. 
In Sections \ref{num2}-\ref{num5}, we will carry out a series of numerical tests to compare the DNN approach and the SCA algorithm, including optimal policy results, convergence results, accuracy results and the generalization ability.  {In Section \ref{cne}, we give a conclusion of the numerical experiments.}

\subsection{Experimental setup}\label{es}
In order to evaluate the performance of our DNN approach, we consider the machine replacement problem, which was studied in \cite{delage2010percentile,varagapriya2022constrained,wiesemann2013robust,goyal2022robust,ramani2022robust}. In the machine replacement problem, we consider an opportunity cost $c_0$ in the objective function along with two kinds of maintenance costs $c_1$ and $c_2$ in the constraint. The opportunity cost $c_0$ comes from the potential production losses when the machine is under repair. The maintenance cost comes from two parts: $c_1$ is the operational consumption of machines, such as the required electricity fee and fuel costs when the machine is working; $c_2$ is the cost occurred from production of inferior quality products. 
We set the states as the used age of a machine. At each state,  there are two possible actions: $a_1$ is to repair and $a_2$ is not to repair. Three considered costs introduced above are incurred at each state. We assume that there are 5 states in all. The transition probabilities are given in Figure 3, where the solid lines denote the case when the decision maker chooses the action “repair" and the dashed lines denote the case when the decision maker chooses the action “do not repair". The numbers along with the lines denote the transition probabilities. For example, at states $2$ - $5$, if the repair action is taken, the machine moves to state 1 with probability 0.8 and stays in the current state with probability 0.2. At state $1$, it stays in the state with probability 1 under the repair action. If the “do not repair" action is taken, the machine moves to the next state with probability 0.9 and stays in the current state with probability 0.1; whereas in the last state, it remains there with probability 1. For the above problem, we consider the corresponding moment-based J-DRCCMDP problem \eqref{399}. The objective of the MDP is to maximize the discounted value function (minimizing the corresponding cost function), while keeping the two maintenance costs not exceeding the tolerance levels with a large probability. 

\begin{figure}[htb]
    \centering
    \includegraphics[width=9cm]{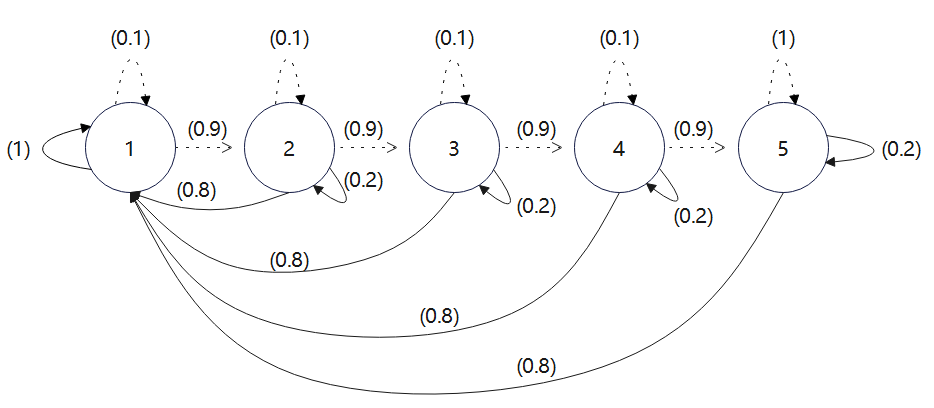}
    \caption{The transition probabilities for the MDP}
    \label{fig:my_label}
\end{figure}

For the MDP, we choose the discount factor $\alpha=0.6$ and assume that the initial distribution $q$ is a uniform distribution. The mean values of the three costs are shown in Table 1. For example, at state 1, if the “repair” action $a_1$ is taken, the mean values of 
$c_0, c_1, c_2$ are $1, 1.5, 0$,
respectively; if the “do not repair” action $a_2$ is taken, the mean values of three costs are $0, 8, 5,$
respectively. The last two states are risky states for which the mean values of costs are much larger. We assume that the reference covariance matrices of the three costs are diagonal and positive definite. Concretely, for both actions, the reference covariance matrix of $r_0, r_1, r_2$ are $\Sigma_0=diag([0.3,0.3,0.3,0.3,0.3,0.3,5,2,8,9])$, $\Sigma_1=diag([0.5,0.5,0.5,0.5,0.5,0.5,8,9,8,9])$ and $\Sigma_2=diag([0.4,0.4,0.4,0.4,0.4,0.4,9,8,8.5,10]),$ respectively.  In our experiment, We set $\rho_{1,0}=\rho_{1,1}=\rho_{2,1}=0.1$, $\rho_{1,2}=\rho_{2,2}=0.15$, $\xi_{1}=\xi_{2}=-40$ and $\hat{\epsilon}=0.95$.

\begin{table}[!ht]\label{yu}
\setlength{\belowcaptionskip}{0.2cm}
\caption{\normalsize The mean values of three costs}
\centering
\begin{tabular}{c|c|c|c|c|c|c}
\hline
\multirow{2}{*}{States} & \multicolumn{2}{c|}{Maintenance cost}     & \multicolumn{2}{c|}{Operation consumption cost}     & \multicolumn{2}{c}{Inferior quality cost}      \\ \cline{2-7} 
                   & {$c_0(s,a_1)$} & $c
                   _0(s,a_2)$ & {$c_1(s,a_1)$} & $c_1(s,a_2)$ & {$c_2(s,a_1)$} & $c_2(s,a_2)$ \\ \hline
1                  & \multicolumn{1}{c|}{1} & 0 & \multicolumn{1}{c|}{1.5} & 8 & {0} & 5 \\ \hline
2                  & \multicolumn{1}{c|}{1} & 0 & \multicolumn{1}{c|}{1.5} & 8 & {0} & 5 \\ \hline
3                  & \multicolumn{1}{c|}{1} & 0 & \multicolumn{1}{c|}{1.5} & 8 & {0} & 8 \\ \hline
4                  & \multicolumn{1}{c|}{4} & 30 & \multicolumn{1}{c|}{5} & 100 & {1.5} & 30 \\ \hline
5                  & \multicolumn{1}{c|}{4} & 70 & \multicolumn{1}{c|}{5} & 200 & {3} & 50 \\ \hline
\end{tabular}
\end{table} 

All numerical experiments are run on a PC with AMD Ryzen 7 5800H CPU and 16.0 GB RAM. We use Python 3.11 as our programming language to implement our DNN model. We solve the ODE systems corresponding to the DNN model with solve$\_$ivp from the scipy.integrate package. We notice that the complexity of solving the ODE system depends on the number of variables. In our test, the numbers of variables $\tau, x, \beta, \chi, \zeta, \theta_1, \theta_2, \varrho_0$ are $10, 2, 2, 2, 1, 5, 5, 10,$ respectively, and so 37 in total. We set the initial point $z_0=(\tau_0, x_0, \beta_0, \chi_0, \zeta_0, {\theta_1}_0, {\theta_2}_0, \varrho_0)$ as $(10^{-3}, 10^{-3}, 10^{-3}, 10^{-3}, 10^{-3}, 10^{-3}, 10^{-3}, 10^{-3}, 10^{-3}, 10^{-3}, -8, -60,
10^{-4}, 10^{-4}, 10^{-4}, 10^{-4}, 10^{-4}, 10^{-4}, 10^{-4},\\ 10^{-4}, 10^{-4}, 10^{-4}, 10^{-4}, 10^{-4}, 10^{-4}, 10^{-4}, 10^{-4}, 10^{-4}, 10^{-4}, 10^{-4}, 10^{-4}, 10^{-4}, 10^{-4}, 10^{-4}, 10^{-4}, 10^{-4}, 10^{-4})$. For comparison, we also use the sequential convex approximation (SCA) algorithm (Algorithm 1) to solve the problem \eqref{399}, where we set the initial points $h^{0}_{1}=0.83, h_{2}^{0}=0.85$, $n_{max}=100$, $\gamma=0.6$ and $L=10^{-8}$. We conduct Algorithm 1 in MATLAB and solve the sub-optimization problems by the MOSEK solver. The detailed numerical results are shown in the following 4 parts from different perspectives.


\subsection{Optimal policy}\label{num2}
We solve the moment-based J-DRCCMDP problem \eqref{399} corresponding to the above machine replacement problem by using the DNN approach and SCA algorithm respectively, and especially derive the optimal solutions $\tau$ of problem \eqref{399}. Then by Remark \ref{TM}, we can compute the optimal policy of the machine replacement problem. For the DNN, we choose the optimal solution when $t=2000$. Table 2 shows the optimal policies at each state found by the two approaches. As there are only two actions at each state, the sum of probabilities of the two actions is 1, thus we denote the probability of one action $\approx1$ when the probability of the other action is very close to 0.

\begin{table}[!ht]\label{okl}
\centering
\caption{\normalsize Optimal policies of moment based J-DRCCMDP}
\begin{tabular}{cc|c|c|c|c|c}
\hline
\multicolumn{2}{c|}{State}                      & 1 & 2 & 3 & 4 & 5 \\ \hline
\multicolumn{1}{c|}{{DNN}} & repair & 1.7576e-08 & 2.8942e-08 & $\approx1$ & $\approx1$ & $\approx1$  \\ \cline{2-7} 
\multicolumn{1}{c|}{}                   & do not repair & $\approx1$ & $\approx1$ & 3.2052e-08 & 4.4931e-07 & 2.7283e-07 \\ \hline
\multicolumn{1}{c|}{{SCA}} & repair & 3.5698e-10 & 4.8275e-10 & $\approx1$ & $\approx1$ & $\approx1$ \\ \cline{2-7} 
\multicolumn{1}{c|}{}                   & do not repair & $\approx1$ & $\approx1$ & 2.1067e-11 & 2.2559e-10 & 5.0490e-10 \\ \hline
\end{tabular}
\end{table}

From Table 2, we can see that the optimal policy for each state derived by two approaches are nearly the same. For two approaches, the probability of “repair” at last three states are all $1$, which is consistent with the fact that the machine gets aging with the state moving forward.

\subsection{Convergence quality}
For the DNN model, we choose the interval $[0,2000]$ of $t$ to observe the convergence performance. Figure 4 shows the values of optimal solutions $\tau$ at the five states. For instance, in (a), the blue curve denotes the value of $\tau(1,a_1)$ and the orange curve denotes the value of $\tau(1,a_2)$, where $a_1, a_2$ correspond to the actions “repair" and “do not repair", respectively. We see from these figures that the solution $\tau$ converges at each state.

\begin{figure}[htbp]
	\centering
	\begin{minipage}{0.31\linewidth}
		\centering
		\includegraphics[width=0.9\linewidth]{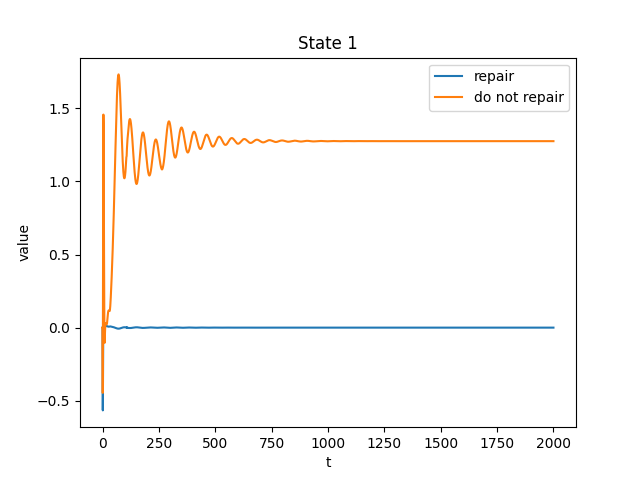}
		\caption{$\tau(1,a_1),\tau(1,a_2)$}
	\end{minipage}
	\begin{minipage}{0.31\linewidth}
		\centering
		\includegraphics[width=0.9\linewidth]{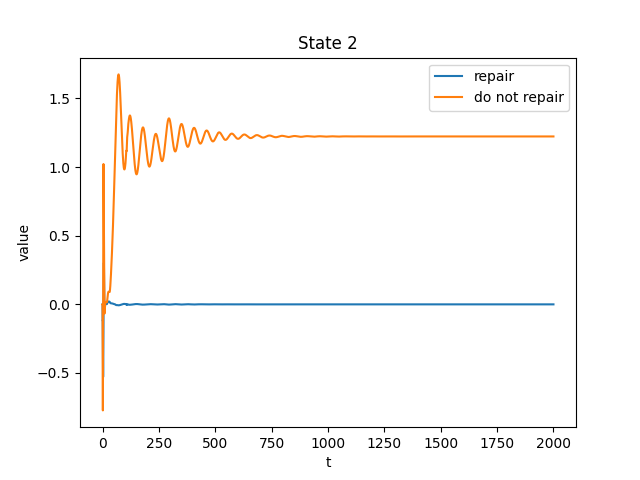}
		\caption{$\tau(2,a_1),\tau(2,a_2)$}
	\end{minipage}
 \begin{minipage}{0.31\linewidth}
        \centering
		\includegraphics[width=0.9\linewidth]{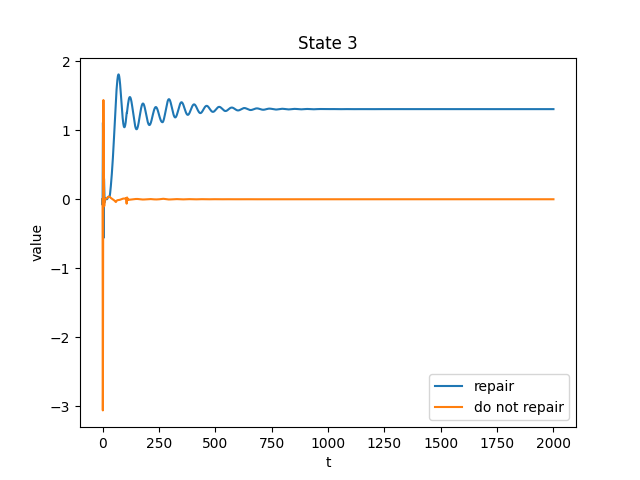}
		\caption{$\tau(3,a_1),\tau(3,a_2)$}
	\end{minipage}
 \begin{minipage}{0.31\linewidth}
        \centering
		\includegraphics[width=0.9\linewidth]{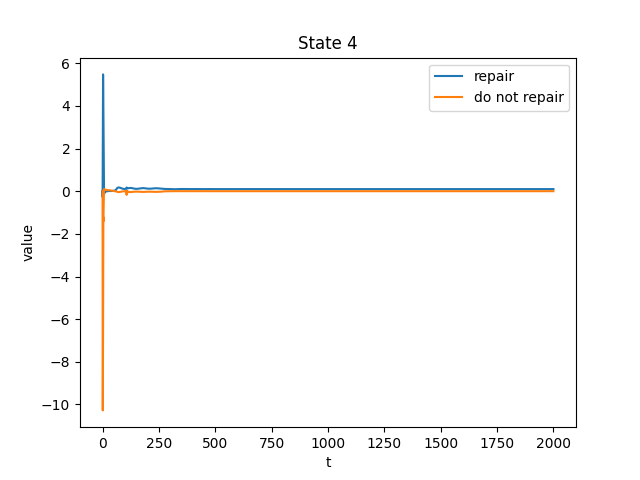}
		\caption{$\tau(4,a_1),\tau(4,a_2)$}
	\end{minipage}
 \begin{minipage}{0.31\linewidth}
        \centering
		\includegraphics[width=0.9\linewidth]{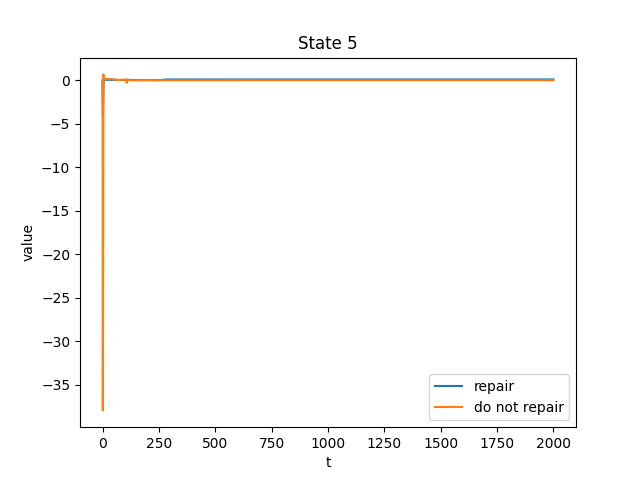}
		\caption{$\tau(5,a_1),\tau(5,a_2)$}
	\end{minipage}
\end{figure}


     
     

    
     

We also use the SCA algorithm to solve the problem \eqref{399}. Figure 5 shows the value of optimal objective function of problem \eqref{399} at each iteration of the SCA algorithm. Figure 6 shows the value of optimal objective function of the DNN approach with $t$ in $[50,2000]$. When $t$ is between $[0,50]$, the objective value varies intensely up to 7250 which is far from the convergence value, so we do not plot that part in the figure.

\begin{figure}[htbp]
	\centering
	\begin{minipage}{0.49\linewidth}
		\centering
		\includegraphics[width=0.9\linewidth]{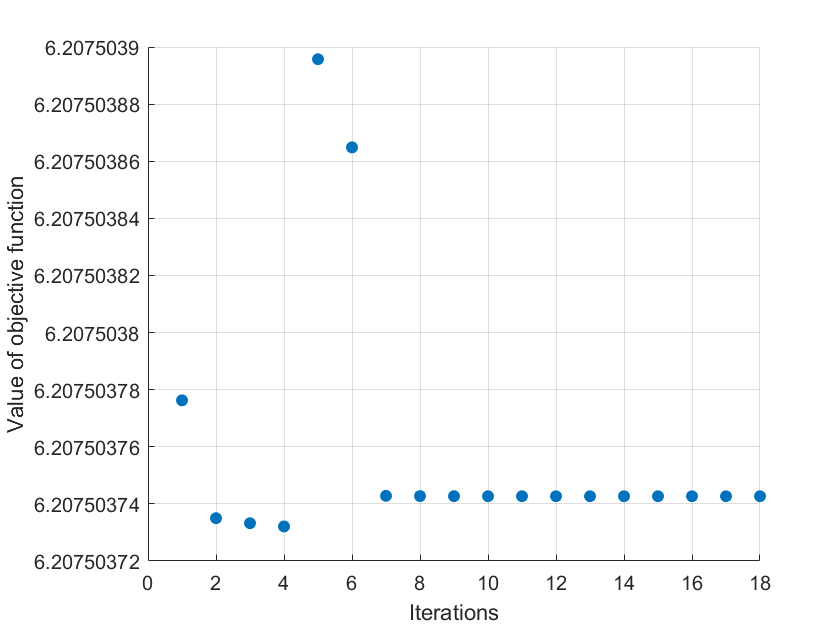}
		\caption{Objective value for SCA algorithm}
	\end{minipage}
	\begin{minipage}{0.49\linewidth}
		\centering
		\includegraphics[width=0.9\linewidth]{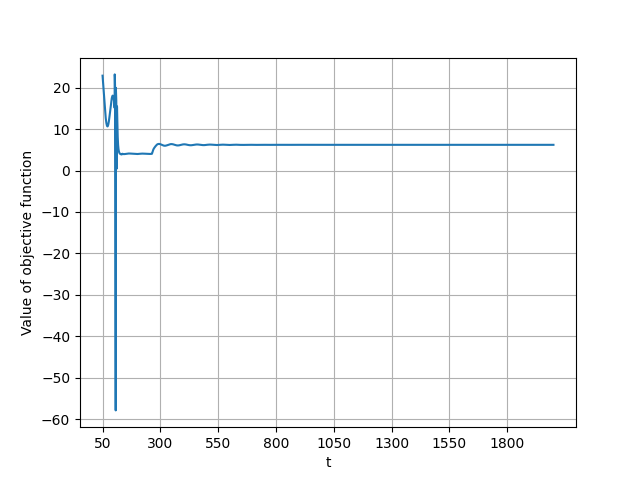}
		\caption{Objective value for DNN approach}
	\end{minipage}
\end{figure} 

Comparing Figure 5 and Figure 6, we find that a key feature of the DNN approach is time-continuity, while the SCA algorithm is with finite steps. The time-continuity of the DNN approach provides more information about how the objective function move towards the optimal value.  {We also see that the convergence speed of two approaches are both fast. For the SCA algorithm, the objective function converges within a relative small bound at the 7th iteration. For the DNN approach, the objective function converges within a relative small bound when $t=300$.}

\subsection{Accuracy of solutions}
We know from Theorem \ref{4} that the equilibrium point of the DNN model \eqref{ujm} is equivalent to the KKT point of \eqref{37}. Thanks to the Lyapunov stability of \eqref{ujm} by Theorem \ref{55}, the solution of DNN is guaranteed to converge to the equilibrium point when $t\rightarrow+\infty$. Based on the definition of the equilibrium point, the samller the value of $\Vert\frac{dz}{dt}\Vert,$ where $z=(\tau, x, \beta, \chi, \theta_1, \theta_2, \varphi)$, the closer the solution of \eqref{ujm} to the equilibrium point. Thus we introduce the following measure to evaluate the accuracy of the solution of our DNN model.

\begin{definition}\label{DEF}
    The accuracy of a solution to the DNN model is the gap between the solution to the DNN model and the KKT point of \eqref{37}. We can compute the accuracy by finding the minimal $\epsilon\in\mathbb{R}$ such that
    $$\Vert\frac{d\tau}{dt}\Vert\le\epsilon, \Vert\frac{dx}{dt}\Vert\le\epsilon, \Vert\frac{d\beta}{dt}\Vert\le\epsilon, \Vert\frac{d\chi}{dt}\Vert\le\epsilon, \Vert\frac{d\zeta}{dt}\Vert\le\epsilon, \Vert\frac{d\theta_1}{dt}\Vert\le\epsilon, \Vert\frac{d\theta_2}{dt}\Vert\le\epsilon, \Vert\frac{d\varrho}{dt}\Vert\le\epsilon.$$
\end{definition}

Figure 7 shows the solution accuracy of the DNN model when solving the machine replacement problem in terms of Definition \ref{DEF}. For example, when $t=2000$, the accuracy $\epsilon=3.945292325991318e-07$. 
When $t\in [0,250]$, the accuracy value may reach nearly 90. Thus we only show the values in the interval $[250,4000]$ to observe the final convergence.

\begin{figure}[htbp]
	\centering
	\includegraphics[width=0.6\linewidth]{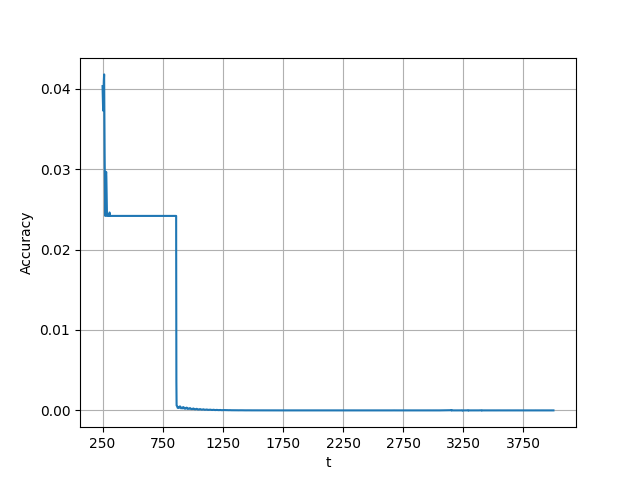}
	\caption{The accuracy of DNN model}
\end{figure}

It can be observed from Figure 7 that the accuracy could be extremely low when $t\ge 1000$. However, for the SCA algorithm, we could not relate its accuracy of solutions with KKT conditions of the original problem. The only way to ensure its accuracy is to set a very low error bound $L$ in Algorithm 1 to force the solutions of two separated optimization problems as close as possible.

\subsection{Generalization performance}\label{num5}
To evaluate the generalization ability of the two solution methods, we examine the out-of-sample performance of the optimal solutions under some randomly generated distributions. We assume that the reference distribution is a conservative estimation of the true distribution, and randomly generate the mean vector of the simulated distribution related to the center of the moments-based ambiguity set. We use the “rand" function in MATLAB to generate 4 groups of out-of-sample distributions, which represent different levels of dispersion. In each group, we randomly generate 100 Gaussian distributions. 
In particular, in the first group, the randomly generated Gaussian distributions are $\mathcal{K}_k^j\sim N(\mu_{k}^{j},\Sigma_{k})$, $k=1,2, j=1,2,...,100$, where $\mu_{k}^{j}=\mu_{k}-1\cdot{\rm{rand}(1,10)}$, $k$ denotes the number of the reward vector and $j$ denotes the number of generation. In the second group, the randomly generated Gaussian distributions are $\mathcal{K}_k^j\sim N(\mu_{k}^{j},\Sigma_{k})$, where $\mu_{k}^{j}=\mu_{k}-2\cdot{\rm{rand}(1,10)}$. In the third group, the randomly generated Gaussian distributions are $\mathcal{K}_k^j\sim N(\mu_{k}^{j},\Sigma_{k})$, where $\mu_{k}^{j}=\mu_{k}-3\cdot{\rm{rand}(1,10)}$. In the forth group, the randomly generated Gaussian distributions are $\mathcal{K}_k^j\sim N(\mu_{k}^{j},\Sigma_{k})$, where $\mu_{k}^{j}=\mu_{k}-3.5\cdot{\rm{rand}(1,10)}$.

With the above setting, we use 100 randomly generated distributions $\mathcal{K}_{k}^{j}, j=1,2,...,100$ in each group and the optimal solutions $\tau$ obtained by the DNN approach and the SCA algorithm to compute the satisfaction probability $$\mathbb{P}_{\mathcal{K}^{j}}(\tau^{\top} r_k\ge \xi_{k}, k=1,2,...,K)=\prod\limits_{k=1}^{K}\mathbb{P}_{\mathcal{K}_k^{j}}(\tau^{\top}\cdot r_k\ge \xi_{k})=\prod\limits_{k=1}^{K}\left(1-\mathcal{H}^{-1}(\frac{\xi_k-\tau^{\top}\mu_k^{j}}{\Vert\tau^{\top}\Sigma_{k}\tau\Vert_{2}})\right),$$ where $\mathcal{H}$ is the standard Gaussian cumulative distribution function. The concrete results are depicted in Figure 8.

\begin{figure}[htbp]
	\centering
	\begin{minipage}{0.49\linewidth}
		\centering
		\includegraphics[width=0.9\linewidth]{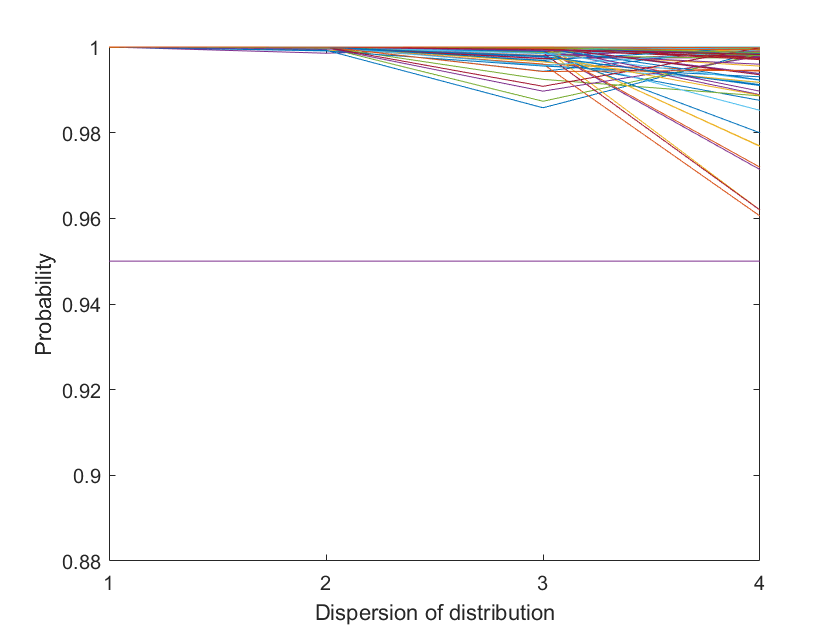}
	\end{minipage}
	\begin{minipage}{0.49\linewidth}
		\centering
		\includegraphics[width=0.9\linewidth]{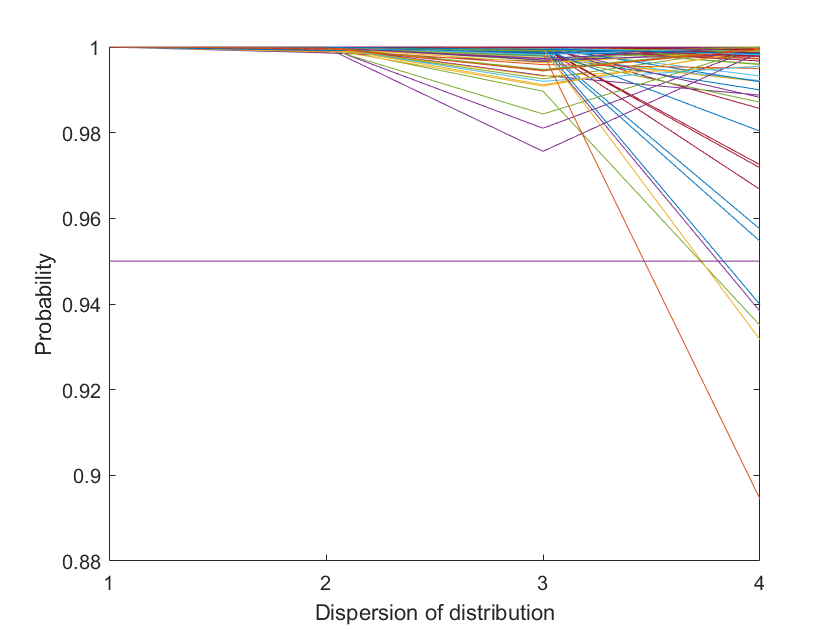}
	\end{minipage}
 \caption{Values of $\mathbb{P}_{\mathcal{K}^{j}}(\tau^{\top} r_k\ge \xi_{k}, k=1,2,...,K), j=1,2,...,100$ with randomly generated distributions $\mathcal{K}^{j}$, $j=1,2,...,100$, where the optimal solutions $\tau$ are obtained by DNN approach and SCA algorithm, respectively.} 
\end{figure}

     

It can be found from Figure 8 that there are 5 out-of-sample distributions in the forth test group, under which the solution found by SCA algorithm fails to guarantee the satisfaction of the chance constraint, while the solution obtained by DNN approach can guarantee the satisfaction in all of the 4 groups. Furthermore, comparing their out-of-sample performance, we can observe that the solution obtained by the DNN approach in complex situations, i.e., 100 randomly generated distributions, is more robust than that derived by the SCA algorithm.

\subsection{Conclusion of numerical experiments}\label{cne}
Based on a series of numerical experiments above, our conclusions of the DNN model in solving the moment-based J-DRCCMDP problem compared with the SCA algorithm are as follows.

 {Firstly, we can find by Table 2 that the numerical solutions of DNN approach and SCA algorithm are nearly the same in this problem, and they both consistent to the aging effect to the machine replacement. Secondly, through convergence results, we can see the objective values obtained by DNN approach change continuously.
However the objective values obtained by SCA algorithm are dispersed, and we cannot capture the whole changing trend. Thirdly, with the 
Lyapunov stability in Theorem \ref{55} and the equivalence between the equilibrium point and the KKT point in Theorem \ref{4}, 
the solutions obtained by DNN approach are surely converging to an equilibrium point which corresponds to a KKT point of the original optimization problem.
Therefore we can depict the accuracy of DNN approach in correspondence to KKT conditions. While for the SCA algorithm, the level of accuracy cannot be reduced continuously w.r.t. KKT conditions.
Lastly, in out-of-sample experiments, we compare the solutions obtained by two approaches. We find that the solution obtained by DNN approach in complex simulation environment performs more robustly
than the solution obtained by SCA algorithm, which indicates that the generalization ability of solutions obtained by DNN approach are better than those obtained by SCA algorithm.}

\section{Conclusion}
In this paper, we study the moment-based joint DRCCMDP and derive its deterministic reformulation. We propose a dynamical neural network approach to solve the resulting nonconvex optimization problem. 
In the numerical experiment, we examine 
the pros and cons of the DNN approach and the SCA algorithm in a machine replacement problem.

For the future research, 
using the DNN approach to solve the DRCCMDP with other kinds of ambiguity sets or studying the DRCCMDP with joint ambiguity on 
the distribution of rewards and the transition probabilities 
are promising topics.

\begin{acknowledgements}
This research was supported by National Natural Science Foundation of China (No. 11991023 and 11901449) and National Key R\&D Program of China (No. 2022YFA1004000) 
\end{acknowledgements}

\end{document}